\let\oldmarginpar\marginpar
\renewcommand\marginpar[1]{\-\oldmarginpar[\raggedleft\footnotesize #1]%
{\raggedright\footnotesize #1}}
\renewcommand{\b}{\mathbin{\beta}}
\newcommand{\s}{\sigma}
\newcommand{\z}{\mathbin{\zeta}}
\newcommand{\om}{\omega}
\renewcommand{\l}{\lambda}
\renewcommand{\k}{\kappa}
\newcommand{\bX}{\bar{X}}
\newcommand{\bY}{\bar{Y}}
\newcommand{\bZ}{\bar{Z}}
\newcommand{\bV}{\bar{V}}
\newcommand{\bW}{\bar{W}}
\newcommand{\cC}{\mathcal{C}}
\newcommand{\cH}{\mathcal{H}}
\newcommand{\cP}{\mathcal{P}}
\newcommand{\cQ}{\mathcal{Q}}
\newcommand{\cQG}{\cQ_G}
\newcommand{\C}{\mathbf{C}}
\newcommand{\Z}{\mathbf{Z}}
\DeclareMathOperator{\spa}{span}
\DeclareMathOperator{\End}{End}
\DeclareMathOperator{\tr}{tr}
\DeclareMathOperator{\SU}{SU}
\DeclareMathOperator{\Ind}{Ind}
\DeclareMathOperator{\tchar}{char}
\newtheorem{thm}{Theorem}[section]
\newtheorem{lem}[thm]{Lemma}
\newtheorem{prop}[thm]{Proposition}
\newtheorem{cor}[thm]{Corollary}
\theoremstyle{definition}
\newtheorem{defn}[thm]{Definition}
\theoremstyle{remark}
\newtheorem{rmk}[thm]{Remark}
\newtheorem{exam}[thm]{Example}
\title[Atomistic subsemirings]{Atomistic subsemirings of the lattice
  of subspaces of an algebra} \author{Daniel S.~Sage}
\address{Department of Mathematics\\
  Louisiana State University\\
  Baton Rouge, LA 70803} 
\email{sage@math.lsu.edu}
\thanks{The author gratefully acknowledges support from NSF
grant~DMS-0606300 and NSA grant~H98230-09-1-0059.}
\subjclass[2000]{Primary: 16Y60, 20N20; Secondary: 16W22}
\begin{document}

\begin{abstract}
  Let $A$ be an associative algebra with identity over a field $k$.
  An atomistic subsemiring $R$ of the lattice of subspaces of $A$,
  endowed with the natural product, is a subsemiring which is a closed
  atomistic sublattice.  When $R$ has no zero divisors, the set of
  atoms of $R$ is endowed with a multivalued product.  We introduce an
  equivalence relation on the set of atoms such that the quotient set
  with the induced product is a monoid, called the condensation
  monoid.  Under suitable hypotheses on $R$, we show that this monoid
  is a group and the class of $k1_A$ is the set of atoms of a
  subalgebra of $A$ called the focal subalgebra.  This construction
  can be iterated to obtain higher condensation groups and focal
  subalgebras.  We apply these results to $G$-algebras for $G$ a
  group; in particular, we use them to define new invariants for
  finite-dimensional irreducible projective representations.

\end{abstract}

\maketitle

\section{Introduction}
\label{sect:intro}


Let $A$ be an associative algebra with identity over a field $k$, and
let $S(A)$ be the complete lattice of subspaces of $A$.  The algebra
multiplication on $A$ induces a product on $S(A)$ given by
$EF=\spa\{ef\mid e\in E, f\in F\}$.  The lattice $S$ thus becomes an
additively idempotent semiring, with $\{0\}$ and $k=k1_A$ (which we
will often denote by $0$ and $1$) as the additive and multiplicative
identities.

Let $R$ be a closed sublattice of $S(A)$ which is also a subsemiring,
i.e., $R$ contains $0$ and $k$ and is closed under arbitrary sums and
intersections and finite products.  (We do not require the maximum
element of $R$ to be $A$.)  A nonzero element $X\in R$ is called
decomposable (or join-reducible) if there exists $U,V\subsetneq R$
such that $X=U+V$ and indecomposable otherwise.  It is immediate that
the multiplication in $R$ is determined by the product of
indecomposable elements.  In other words, the semiring structure is
determined by the structure constants $c_{U,V}^W$ for $U,V,W\in R$
indecomposable, where $c_{U,V}^W$ is $1$ if $W\subset UV$ and $0$
otherwise.

In this paper, we consider subsemirings $R$ whose product is
determined by its minimal nonzero elements--the atoms of the lattice.
This means that the indecomposable elements of $R$ are precisely the
atoms, so that every nonzero element is a join of atoms, i.e., $R$ is
an atomistic lattice\footnote{In the usual definition, every nonzero
  element of an atomistic lattice is a finite join of atoms.  In this
  paper, we allow arbitrary joins of atoms.}.

\begin{defn} A subsemiring $R\subset S(A)$ is called an \emph{atomistic
    subsemiring} of $S(A)$ if it is also a closed atomistic sublattice.
\end{defn}

Note that $k$ is always an atom in $R$.

\begin{exam} For any $A$, $S(A)$ and $\{0,1\}$
    are atomistic subsemirings.
  \end{exam}
\begin{exam} Let $X$ be any proper subspace with $X+ k1_A=A$.  Then
  $R=\{0,1,X,A\}$ is an atomistic subsemiring if and only if $X^2\in
  R$.  All four possible values for $X^2$ can occur.  Indeed, if we
  let $X=k\bar{t}$ in the three two-dimensional algebras
  $k[t]/(t^2)$, $k[t]/(t^2-1)$, and $k[t]/(t^2-t)$, we obtain $X^2$
  equal to $0$, $1$, and $X$ respectively.  On the other hand, if
  $X=\spa(\bar{t},\bar{t}^2)$ in $A=k[t]/(t^3-1)$, then $X^2=A$.
  (Note that there are never any atomistic subsemirings of size $3$.)
\end{exam}
\begin{exam} Let $V$ be a vector space with $\dim V\ge 2$, and suppose
  $(\tchar{k},\dim V)=1$. Let $A=\End(V)$, and let
  $X=\{x\in\End(V)\mid \tr(x)=0\}$.  Then $R=\{0,1,X,A\}$ is atomistic
  with $X^2=A$.  To see this, simply note that every matrix unit lies
  in $X^2$: $E_{ii}=E_{ij}E_{ji}$ and $E_{ij}=E_{ij}(E_{ii}-E_{jj})$
  where $i\ne j$.
\end{exam}

Our primary motivation for considering atomistic subsemirings comes
from representation theory.  Let $G$ be a group which acts on $A$ by
algebra automorphisms.  This means that $A$ is a $k[G]$-module such
that $g\cdot 1_A=1_A$ and $g\cdot(ab)=(g\cdot a)(g\cdot b)$ for all
$g\in G$ and $a,b\in A$.  We let $S_G(A)\subset S(A)$ be the set of
all $k[G]$-submodules of $A$.  This set, called the
\emph{subrepresentation semiring} of $A$, is simultaneously a
subsemiring and complete sublattice of $S(A)$; such semirings were
introduced and studied in \cite{S2,S3}.  If $A$ is a completely
reducible representation, i.e., a direct sum of irreducible
representations, then $S_G(A)$ is an atomistic subsemiring.  For
example, this occurs when $G$ is finite, $A$ is finite-dimensional,
and $k$ has characteristic zero.

When $G=\SU(2)$ (or more generally, $G$ is a \emph{quasi-simply
  reducible} group), then the subrepresentation semirings for the
$G$-algebras $\End(V)$ (with $V$ a representation of $G$) have had
important applications in materials science and
physics~\cite{GS,GMS,S2}.  The structure of such semirings is
intimately related to the theory of $6j$-coefficients from the quantum
theory of angular momentum~\cite{S2,S3,S4,KS}.

Our goal in this paper is to study the set of atoms $\cQ(R)$ of an
atomistic subsemiring and to use it to define new invariants for
appropriate $R$--the condensation group, the focus, the focal
subalgebra, and higher analogues.  Our methods are motivated by the
theory of hypergroups.

We now give a brief outline of the
contents of the paper.  In Section~\ref{S:atom}, we define a
multivalued product on the set $\cQ(R)$ of atoms of an atomistic
subsemiring $R$.  In the next section, we introduce an equivalence
relation $\z^*$ on $\cQ(R)$.  We show that if $R$ has no
zero-divisors, then the quotient set $\cQ(R)/\z^*$ is naturally a
monoid (called the condensation monoid) while if $R$ is \emph{weakly
  reproducible}, the condensation monoid is in fact a group.  In
Section~\ref{S:focus}, we define the focus $\varpi_R\subset\cQ(R)$ and
focal subalgebra $F(R)\subset A$ of $R$.  The main result is
Theorem~\ref{T:focus}, which states that if $R$ is weakly reproducible
of finite length, then $[0,F(R)]$ is an atomistic subsemiring with the
same properties and whose set of atoms is $\varpi_R$.  This allows us
to iterate our construction to obtain higher order versions of our
invariants.  In Section~\ref{S:complete}, we prove
Theorem~\ref{T:focus} by analyzing \emph{complete subsets} of
$\cQ(R)$.  We apply our results to $G$-algebras in the final section.
In particular, we show how to associate new invariants to irreducible
projective representations.

\section{A hyperproduct on the set of atoms}\label{S:atom}

From now on, $R$ will always be an atomistic subsemiring of $S(A)$.
Let $\cQ(R)$ denote the set of atoms of $R$.  If $R=S_G(A)$ for a
$G$-algebra $A$, we write $\cQ_G(A)$ instead of $\cQ(S_G(A))$.  We
make the notational convention that, unless otherwise specified,
capital letters towards the end of the alphabet will denote atoms.

There is a natural operation $\cQ(R)\times\cQ(R)\to\cP(\cQ(R))$ given
by $X\circ Y=\{Z\in \cQ(R)\mid Z\subset XY\}$.  Our first goal is to
find a natural equivalence relation on $\cQ(R)$ (for appropriate $R$)
for which $\circ$ induces a monoid (or group) structure on the set of
equivalence classes.

Before proceeding, we need to recall some definitions from the theory
of hypergroups.  A set $\cH$ is called a hypergroupoid if it is
endowed with a binary operation $\circ:\cH\times\cH\to\cP^*(\cH)$,
where $\cP^*(\cH)$ is the set of nonempty subsets of $\cH$.  If this
operation is associative, then $\cH$ is called a semihypergroup; if
$\cH$ also satisfies the reproductive law $\cH\circ x=\cH=x\circ \cH$
for all $x\in\cH$, then $\cH$ is called a hypergroup.  (For more
details on hypergroups, see the books by Corsini~\cite{Cor} and Vougiouklis~\cite{Vou}.)

An element $e$ of the hypergroupoid $\cH$ is called a scalar identity
if $e\circ x=\{x\}=x\circ e$ for all $x\in\cH$; if a scalar identity
exists, it is unique.  For later use, we introduce a weak version of
the reproductive law.  A hypergroupoid with scalar identity $e$
satisfies the \emph{weak reproductive law} if for any $x\in\cH$, there
exists $u,v\in\cH$ such that $e\in x\circ u\cap v\circ x$.  Note that
a semihypergroup that satisfies the weak reproductive law is a
hypergroup.  Indeed, given $y\in\cH$, $y\in y\circ e\subset y\circ
(v\circ x)=(y\circ v)\circ x$, so there exists $w\in y\circ v$ such
that $y\in w\circ x$.  Similarly, there exists $w'$ such that $y\in
x\circ w'$.

In general, $\cQ(R)$ is not even a hypergroupoid.  However, we have
the following result:
\begin{prop} Let $R$ be an atomistic subsemiring.  Then $(\cQ(R),\circ)$
  is a hypergroupoid if and only if $R$ is an entire semiring (i.e.,
  $R$ has no left or right zero divisors).
\end{prop}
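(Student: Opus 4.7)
The plan is to prove the biconditional by translating the hypergroupoid condition into a purely algebraic one about $R$. By definition, $(\cQ(R),\circ)$ is a hypergroupoid iff $X\circ Y$ is nonempty for every pair of atoms $X,Y$. Since $R$ is atomistic, every nonzero element of $R$ is a join of atoms, so any nonzero product $XY\in R$ contains at least one atom $Z$. Hence $X\circ Y\ne\emptyset$ is equivalent to $XY\ne 0$, and the proposition reduces to showing that ``$XY\ne 0$ for all atoms $X,Y$'' is equivalent to $R$ being entire.

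For the forward direction, I would assume $R$ is entire. If $X,Y\in\cQ(R)$, then both are nonzero, so $XY\ne 0$ since $R$ has no zero divisors; by the observation above, $X\circ Y$ contains an atom and is therefore nonempty.

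For the converse, I would argue contrapositively: assume $R$ is not entire, so there exist nonzero $U,V\in R$ with $UV=0$. Since $R$ is atomistic, choose atoms $X\subset U$ and $Y\subset V$. The subspace product is monotone in each argument (any element of $XY$ is a linear combination of products $xy$ with $x\in X\subset U$ and $y\in Y\subset V$), so $XY\subset UV=0$ and therefore $XY=0$. Consequently no atom lies in $XY$, $X\circ Y=\emptyset$, and $(\cQ(R),\circ)$ fails to be a hypergroupoid.

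There is no real obstacle: the only thing to keep track of is the atomistic hypothesis, which is used in two places -- in the forward direction to guarantee that a nonzero product contains an atom, and in the reverse direction to produce atoms inside arbitrary nonzero elements. The rest is essentially monotonicity of the product on $S(A)$ under inclusion.
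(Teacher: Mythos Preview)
Your proof is correct and follows essentially the same approach as the paper: both directions hinge on the observation that $X\circ Y\ne\varnothing$ iff $XY\ne 0$, the forward direction uses entirety directly, and the converse chooses atoms inside a pair of zero-dividing elements to produce an empty hyperproduct. Your version just makes the monotonicity of the product and the role of the atomistic hypothesis a bit more explicit.
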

\begin{proof} Suppose $R$ is entire.  If $X,Y\in \cQ(R)$, then the
  nonzero subspace $XY$ must contain an atom, so $X\circ Y\ne
  \varnothing$.  Conversely, if $E,F$ are nonzero elements of $R$ such
  that $EF=0$, then choosing $X,Y\in\cQ(R)$ such that $X\subset E$ and
  $Y\subset F$ implies that $XY=0$, i.e., $X\circ Y=\varnothing$.
\end{proof}

In particular, if $A$ has zero divisors, then $\cQ(S(A))$ is not a
hypergroupoid.  We will only be interested in atomistic subsemirings
$R$ for which $\cQ(R)$ is a hypergroupoid, so, from now on, we assume
that $R$ is entire, unless otherwise specified.  Note that $k$ is a
scalar identity for $\cQ(R)$.

We begin by considering a motivating example.  We need to recall some
basic properties of semisimple, multiplicity-free representations.
This class of $G$-modules is closed under taking submodules and
quotients.  Any such representation $V$ is the direct sum of its
irreducible submodules, and this is the only way of decomposing $V$ as
the internal direct sum of irreducible submodules.  Moreover, there is
a bijection between the power set of the set of irreducible submodules
of $V$ and the set of subrepresentations of $V$ given by $J\mapsto
\sum_{X\in J} X$.  It follows that if $\{V_i\mid i\in I\}$ is a
collection of submodules of $V$ and $W=\sum_{i\in I} V_i$, then for
$X$ irreducible, $X\subset W$ if and only if $X\subset V_j$ for some
$j\in I$.

\begin{prop}\label{Prop:mf} Let $A$ be a multiplicity-free $G$-algebra with no
  proper, nontrivial left (or right) invariant ideals.  Then $\cQ_G(A)$ is
  a hypergroup.
\end{prop}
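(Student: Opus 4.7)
The plan is to verify the three ingredients of a hypergroup for $\cQ_G(A)$: that $\circ$ lands in nonempty subsets, that it is associative, and that the reproductive law holds. Throughout, the two crucial structural features I would exploit are (a) the simplicity hypothesis (every nonzero $G$-invariant one-sided ideal equals $A$) to prevent multiplication from collapsing, and (b) the multiplicity-free property, which lets me replace ``contained in a sum of submodules'' by ``contained in one of the summands'' for irreducible targets.

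First, I would check that $R=S_G(A)$ is entire, which by the preceding proposition gives that $\cQ_G(A)$ is a hypergroupoid. Given nonzero $G$-submodules $E,F\subset A$, the right $G$-invariant ideal $FA$ is nonzero (it contains $F$), hence equals $A$ by hypothesis. If $EF=0$, then $EA=E(FA)=(EF)A=0$, forcing $E=0$, contradiction. So $EF\neq 0$, and the same argument on the other side works with $AE=A$.

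Next, for associativity, let $X,Y,Z$ be atoms. Because $A$ is multiplicity-free and $XY$ is a $G$-submodule, the remark preceding the proposition gives $XY=\sum_{W\in X\circ Y}W$, and consequently $(XY)Z=\sum_{W\in X\circ Y}WZ$. An atom $U$ lies in $(X\circ Y)\circ Z$ iff $U\subset WZ$ for some $W\in X\circ Y$, and by multiplicity-freeness applied to the sum $\sum_W WZ$, this happens iff $U\subset(XY)Z$. The analogous argument identifies $X\circ(Y\circ Z)$ with the set of atoms in $X(YZ)$, and associativity of multiplication in $A$ finishes the step.

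Finally, for the reproductive law, the paper has already reduced things to the weak reproductive law (a semihypergroup with scalar identity satisfying it is a hypergroup), and $k=k1_A$ is evidently a scalar identity of $\cQ_G(A)$. So I only need, for each atom $X$, atoms $U,V$ with $k\subset XU$ and $k\subset VX$. Since $XA$ is a nonzero $G$-invariant right ideal, $XA=A$. Writing $A=\bigoplus_{W\in\cQ_G(A)}W$ (using multiplicity-freeness and the fact that $k1_A$ itself is one of the atoms), I get $A=XA=\sum_W XW$. Because $k$ is irreducible and sits inside a sum of $G$-submodules, the multiplicity-free remark again yields $k\subset XU$ for some atom $U$. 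The symmetric argument with $AX=A$ supplies $V$ with $k\subset VX$, completing the proof.

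The only place any care is needed is the reproductive step, and it is really the combination of the one-sided ideal hypothesis (to guarantee $XA=A$) with the multiplicity-free ``sum into summand'' principle that makes it go through; without either hypothesis, $k$ could fail to appear inside a single $XU$.
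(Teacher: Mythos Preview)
Your proof is correct and follows essentially the paper's approach: the associativity argument is identical, and the reproductive step uses the same computation $A=XA=\sum_W XW$ together with the multiplicity-free ``sum into summand'' principle (the paper simply proves the full reproductive law for every atom directly rather than proving the weak version for $k$ and invoking the earlier lemma). The only genuinely different move is your entireness argument via $FA=A$ and $E(FA)=(EF)A$, whereas the paper instead observes that the left annihilator $Y^\perp=\{a\in A\mid aY=0\}$ is a proper $G$-invariant left ideal, hence zero; both are equally short.
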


\begin{proof}  First, we show that the multiplication on $\cQ_G(A)$ is
  associative.  Fix  $X,Y,Z\in \cQ_G(A)$.  Since $A$ is
  multiplicity-free, $XY=\sum_{j\in J}U_j$, where $X\circ Y=\{U_j\mid
  j\in J\}$.  As discussed above, an irreducible submodule $W$ lies in
  $(XY)Z=\sum U_jZ$ if and only if it is contained in $U_iZ$ for some
  $i$, i.e., $W\in U_i\circ Z$.  We thus see that $(X\circ Y)\circ Z$
  is the set of irreducible submodules of $XYZ$.  A similar argument
  shows that the same holds for $X\circ(Y\circ Z)$.

  Next, we show that $X\circ Y\ne \varnothing$ for any $X,Y\in
  \cQ_G(A)$.  It suffices to show that $XY\ne 0$ for all $X,Y$.  Let
  $Y^\perp=\{a\in A\mid ay=0\text{ for all }y\in Y\}$.  The subspace
  $Y^\perp$ is clearly a left ideal.  Moreover, it is a
  subrepresentation: given $g\in G, u\in Y^\perp$, $(g\cdot
  a)u=g\cdot(a(g^{-1}\cdot u))=g\cdot 0=0$.  Since $Y^\perp\ne A$, our
  hypothesis on invariant left ideals implies that $Y^\perp=0$ and
  $XY\ne 0$ for all $X$. 

  Finally, we show that $X\circ \cQ_G(A)=\cQ_G(A)=\cQ_G(A)\circ X$ for
  any $X$.  The subspace $AX$ is a nonzero left ideal which is
  obviously a subrepresentation, so $AX=A$.  Writing $A$ as a sum of
  irreducible submodules $A=\sum_{i\in I} U_i$, we have $A=\sum U_iX$.
  The usual multiplicity-free argument shows that each $U_j$ lies in
  some $U_{i_j}X$, so $U_j\in U_{i_j}\circ X$.  The other equality
  uses the condition on invariant right ideals.
\end{proof}

Matrix algebras give an important class of examples.  If $V$ is a
finite-dimensional vector space and $\End(V)$ is a $G$-algebra, then
$V$ is naturally a projective representation of $G$~\cite{S1}.  It was
further shown in \cite{S1} that $\End(V)$ for such representations has
no proper, nontrivial invariant left or right ideals if and only if
$V$ is irreducible.  Hence, we obtain:

\begin{cor} If $V$ is a finite-dimensional irreducible projective
  representation of a group such that $\End(V)$ is multiplicity free,
  then $\cQ_G(\End(V))$ is a hypergroup.
\end{cor}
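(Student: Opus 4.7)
The plan is to deduce the corollary directly from Proposition~\ref{Prop:mf} by verifying its two hypotheses for the $G$-algebra $A=\End(V)$.

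First, I would observe that the multiplicity-free hypothesis on $\End(V)$ as a $G$-module is explicitly assumed, so the first hypothesis of Proposition~\ref{Prop:mf} is immediate. The only remaining task is to show that $\End(V)$ has no proper, nontrivial $G$-invariant left or right ideals. For this I would simply invoke the result from \cite{S1} cited in the paragraph just above the corollary: when $\End(V)$ is a $G$-algebra, $V$ carries a canonical projective representation of $G$, and $\End(V)$ admits no proper nontrivial invariant one-sided ideals precisely when this projective representation is irreducible. Since $V$ is irreducible by hypothesis, this condition holds.

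With both hypotheses of Proposition~\ref{Prop:mf} verified, the proposition applies to $A=\End(V)$, giving that $\cQ_G(\End(V))$ is a hypergroup. The argument is therefore a one-line application, with no genuine obstacle: all the work is hidden in Proposition~\ref{Prop:mf} and in the structural result from \cite{S1}. The only thing worth remarking is that the projective (rather than linear) nature of the $G$-action on $V$ is harmless here, because what matters for the semiring $S_G(\End(V))$ is only the genuine $G$-action on $\End(V)$ by algebra automorphisms, which is well defined regardless of whether the action on $V$ lifts to an honest representation.
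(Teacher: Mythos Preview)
Your proposal is correct and follows exactly the paper's approach: the corollary is deduced immediately from Proposition~\ref{Prop:mf}, with the multiplicity-free hypothesis assumed outright and the absence of proper nontrivial invariant one-sided ideals supplied by the result of \cite{S1} cited just before the statement. There is nothing to add.
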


This corollary applies, for example, to every irreducible
complex representation of $\SU(2)$.

The importance of Proposition~\ref{Prop:mf} stems from the fact that
there is a group naturally associated to every hypergroup.  More
generally, let $\cH$ be a semihypergroup.  Consider the relation $\b$
defined by $x\b y$ if and only if there exists $z_1,\dots,z_n\in\cH$
such that $x,y\in z_1\circ\dots\circ z_n$.  Koskas showed that if
$\b^*$ is the transitive closure of $\b$, then the induced
multiplication makes $\cH/\b^*$ into a semigroup, and $\b^*$ is the
largest equivalence relation on $\cH$ with this
property~\cite{Koskas}.  If $\cH$ is a hypergroup, then Freni proved
that $\b$ is automatically transitive~\cite{Freni91}; thus, $\cH/\b$
is a group.

We are led to the following provisional definition.

\begin{defn} Let $A$ be a $G$-algebra satisfying the hypotheses of
  Proposition~\ref{Prop:mf}.  The group $Q_G(A)=\cQ_G(A)/\b$ is called
  the \emph{condensation group}  of $A$.
\end{defn}

We will generalize this definition to a much broader class of
atomistic subsemirings below.  However, before continuing we provide a
few examples.

\begin{exam} If $k$ denotes the trivial $G$-algebra, then $Q_G(k)$ is the trivial group.
\end{exam}
\begin{exam} If $V$ is any irreducible representation of $\SU(2)$,
  then $Q_{\SU(2)}(\End(V))=1$.  The proof is a special case of
  Theorem~\ref{T:Lie} below.
\end{exam}
\begin{exam} Let $V$ be the standard representation of $S_3$ over the
  complex numbers.  The corresponding $S_3$-algebra decomposes as
  $\End(V)=\C\oplus \s\oplus V$, where $\s$ is the sign
  representation.  Since $\s^2=\C$, $\s V=V\s=V$, and
  $V^2=\C\oplus\s$, we see that the classes of $\b$ are $\{\C,\s\}$
  and $\{V\}$; hence, the condensation group has order $2$.
\end{exam}
\begin{exam} If $F$ is a finite Galois extension of $k$ with abelian
  Galois group $G$, then $\cQG(F)=G$.
\end{exam}

We remark that if the relation $\b$ is replaced by Freni's relation
$\gamma$~\cite{Freni02}, one gets an abelian group canonically related
to any hypergroup.  However, we will not attempt to generalize the
abelian group $\cQ_G(A)/\gamma$ to other atomistic subsemirings in
this paper.

\section{The equivalence relation $\z^*$}\label{S:zeta}

It is not true in general that the hypergroupoid $\cQ(R)$ is a
hypergroup or even a semihypergroup.  For example, the binary
operation on $\cQ_{A_4}(\End(W))$ is not associative, where $W$ is the
three-dimensional irreducible representation of $A_4$.  Moreover, the
reproductive law is not satisfied.  (See Example~\ref{E:A4} below.)
We can thus no longer use the relation $\b^*$ to associate a monoid or
group to $R$.  Instead, we will do so by introducing a new relation
$\z$.  This relation will coincide with $\b$ in the situation of
Proposition~\ref{Prop:mf}.

\begin{defn}  The relation $\z$ on $\cQ(R)$ is defined by $X\z Y$ if and
  only if there exists $Z_1,\dots,Z_n\in\cQ(R)$ such that $X,Y\subset
  \prod_{i=1}^n Z_i$.  We let $\z^*$ denote the transitive closure of $\z$.
\end{defn}

It is obvious that $\z^*$ is an equivalence relation.  We will let
$\bX$ denote the equivalence class of $X\in\cQ(R)$.

\begin{rmk}   If $Z$ is an atom contained in the $\circ$ product of
  $Z_1,\dots,Z_n$ with any choice of parentheses, then $Z\subset
  \prod_{i=1}^n Z_i$.  In fact, the relation $\b$ can be defined for
  hypergroupoids, and this observation just says that $\b\subset\z$.
  However, the set of $\b^*$-equivalence classes is not necessarily a monoid.
\end{rmk}

\begin{defn}Let $R$ be an entire, atomistic subsemiring of
    $S(A)$.
\begin{enumerate}\item  $R$ is called \emph{weakly reproducible} if
  the hypergroupoid $\cQ(R)$ satisfies the weak reproductive law, i.e.,
  for all $X\in\cQ(R)$, there exists $Y,Z\in\cQ(R)$ such that $k\in
  X\circ Y\cap Z\circ X$.
\item $R$ is called \emph{reproducible} if $\cQ(R)$ satisfies the
  reproductive law, i.e., for all $X\in\cQ(R)$, $\cQ(R)\circ
  X=\cQ(R)=X\circ \cQ(R)$.
\end{enumerate}
\end{defn}
\begin{rmk}\label{R:entire} One can define an atomistic subsemiring $R$ to be weakly
  reproducible without the assumption that $R$ is entire.  However,
  $R$ is then entire automatically.  Indeed, if $XY=0$ for
  $X,Y\in\cQ(R)$, then weak reproducibility implies the existence of
  $Z$ such that $k\subset ZX$, so $Y=kY\subset ZXY=0$, a
  contradiction.
\end{rmk}

\begin{thm} Let $R$ be an entire, atomistic semiring of $S(A)$.  Then
\begin{enumerate} \item The induced multiplication on classes makes
  $Q(R)\overset{\text{def}}{=}\cQ(R)/\z^*$ into a monoid.  
\item If $R$ is weakly reproducible, then $Q(R)$ is a group.
\end{enumerate}
\end{thm}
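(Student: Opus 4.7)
The plan is to define $\bar X \cdot \bar Y$ to be the common $\z^*$-class of the atoms contained in $XY$, and then verify in turn that this operation is well-defined, associative, has identity $\bar k$, and (under weak reproducibility) admits two-sided inverses; together these yield (1) and (2).

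For well-definedness, note first that since $R$ is entire we have $XY \neq 0$, so $X \circ Y$ is nonempty. Any two atoms $Z_1, Z_2 \subset XY$ are immediately $\z$-related via the length-two sequence $X, Y$, so $X \circ Y$ lies in a single $\z$-class. To see the resulting class depends only on $\bar X$ and $\bar Y$, it suffices by transitivity and symmetry to handle one $\z$-step at a time in each slot: if $X, X' \subset W_1 \cdots W_n$, then for any $U \in X \circ Y$ and $U' \in X' \circ Y$ we have $U \subset XY \subset W_1 \cdots W_n Y$ and likewise $U' \subset W_1 \cdots W_n Y$, witnessing $U \z U'$; the right-hand slot is symmetric. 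Associativity is then immediate: both $(\bar X \cdot \bar Y) \cdot \bar Z$ and $\bar X \cdot (\bar Y \cdot \bar Z)$ are represented by atoms lying inside the unambiguous product $XYZ$, and all such atoms share a common $\z$-class via the atom sequence $X, Y, Z$. Since $k$ is already noted to be a scalar identity of the hypergroupoid $\cQ(R)$, the class $\bar k$ is an identity in $Q(R)$, completing (1).

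For (2), weak reproducibility supplies, for each $X \in \cQ(R)$, atoms $Y$ and $Z$ with $k \in X \circ Y \cap Z \circ X$. By construction of the quotient product, $\bar X \cdot \bar Y = \bar k = \bar Z \cdot \bar X$, so $\bar X$ admits both a right and a left inverse in the monoid $Q(R)$; these must coincide by the standard monoid argument, making $Q(R)$ a group. The main delicacy in the whole argument lies in the well-definedness step: one must resist the temptation to argue via $\z^*$ applied to $\circ$-products on classes (which is circular, since $\circ$ is multivalued and not yet known to descend), and instead work directly with the inclusion of representatives into a common product $W_1 \cdots W_n Y$ of atoms in the ambient semiring $R$, which is exactly the data required by the definition of $\z$.
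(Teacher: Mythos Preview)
Your proof is correct and follows essentially the same approach as the paper. The paper packages your well-definedness argument as the lemma that $\z^*$ is \emph{strongly regular} on the hypergroupoid $\cQ(R)$ and then invokes the standard fact (citing Corsini) that a strongly regular equivalence induces a single-valued operation on the quotient; your direct computation that $U,U'\subset W_1\cdots W_nY$ is exactly the content of that lemma, and the associativity and inverse steps are identical to the paper's.
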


\begin{defn}  The monoid $Q(R)$ is called the \emph{condensation
    monoid} (or \emph{group}) of $R$.
\end{defn}

The following lemma shows that this terminology does not conflict with
our previous definition.

\begin{lem} If $A$ satisfies the hypotheses of
  Proposition~\ref{Prop:mf}, then $\b$ and $\z$ coincide on $\cQ_G(A)$.
\end{lem}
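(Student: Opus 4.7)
The plan is to establish the nontrivial inclusion $\z\subset\b$; the reverse inclusion $\b\subset\z$ holds in any hypergroupoid, as noted in the remark preceding the definition of $\z$.

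The key step is the following claim: for any atoms $Z_1,\dots,Z_n\in\cQ_G(A)$, the left-associated iterated hyperproduct $(\cdots((Z_1\circ Z_2)\circ Z_3)\circ\cdots)\circ Z_n$ coincides with the set of atoms contained in the algebra product $\prod_{i=1}^n Z_i$. I would prove this by induction on $n$, the base case $n=2$ being the definition of $\circ$. For the induction step, assume the claim at stage $m$, so that $\prod_{i=1}^m Z_i=\sum_{j\in J}U_j$ is the multiplicity-free decomposition into atoms, and $\{U_j\}_{j\in J}$ equals the iterated hyperproduct. Then $\prod_{i=1}^{m+1}Z_i=\sum_j U_jZ_{m+1}$, and by the multiplicity-free property of submodules of $A$ (exactly as exploited in the proof of Proposition~\ref{Prop:mf}), an atom $W$ is contained in this sum if and only if $W\subset U_jZ_{m+1}$ for some $j\in J$, equivalently $W\in U_j\circ Z_{m+1}$ for some $j$. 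This completes the induction.

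Given the claim, suppose $X\z Y$ and choose atoms $Z_1,\dots,Z_n$ with $X,Y\subset\prod_{i=1}^n Z_i$. Since $X$ and $Y$ are themselves atoms of $\cQ_G(A)$ contained in $\prod_{i=1}^n Z_i$, the claim places both of them inside the left-associated iterated hyperproduct of $Z_1,\dots,Z_n$, which witnesses $X\b Y$. I do not anticipate any genuine obstacle; the algebraic content is entirely the multiplicity-free observation already used in Proposition~\ref{Prop:mf}, and what remains is clean inductive bookkeeping.
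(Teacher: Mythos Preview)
Your proposal is correct and follows essentially the same route as the paper: the paper's one-line proof asserts that ``a similar argument to that used to demonstrate the associativity of $\cQ_G(A)$ shows that $Z_1\circ\dots\circ Z_n$ is the set of irreducible submodules of $\prod_{i=1}^n Z_i$,'' which is exactly the claim you prove by induction. The only cosmetic difference is that the paper, having already established associativity in Proposition~\ref{Prop:mf}, writes the iterated hyperproduct without parentheses rather than fixing the left-associated one.
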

\begin{proof} A similar argument to that used to demonstrate the
  associativity of $\cQ_G(A)$ shows that $Z_1\circ\dots\circ Z_n$ is
  the set of irreducible submodules of $\prod_{i=1}^n Z_i$, so
  $\b=\z$.
\end{proof}

Recall that an equivalence relation $\mathbin{\sim}$ on a
hypergroupoid $\cH$ is called strongly regular if, for any $x,y$ such
that $z\mathbin{\sim}y$ and any $w\in\cH$, then $u\in x\circ w$ and
$v\in y\circ w$ (resp.  $u\in w\circ x$ and $v\in w\circ y$) implies
that $u\mathbin{\sim} v$.  It is a standard fact that for such
$\mathbin{\sim}$, $\circ$ induces a binary operation on
$\cH/\mathbin{\sim}$ via $\bar{x}\circ\bar{y}=\bar{z}$, where $z\in
x\circ y$ \cite{Cor}.  Indeed, strong regularity implies that the set
$\{\bar{z}\mid z\in x'\circ y'\text{ for some }x'\in\bar{x},
y'\in\bar{y}\}$ is a singleton.

\begin{lem} The equivalence relation $\z^*$ is strongly regular.
\end{lem}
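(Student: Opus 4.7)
The plan is to first establish the strong regularity of the raw relation $\z$ itself, and then to bootstrap this to its transitive closure $\z^*$ using only the fact that $R$ is entire (so that every product of atoms contains at least one atom).

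For the first step, suppose $X\z Y$ witnessed by atoms $Z_1,\dots,Z_n$ with $X,Y\subset\prod_{i=1}^n Z_i$, and fix any $W\in\cQ(R)$. Take $U\in X\circ W$ and $V\in Y\circ W$. Since the product in $R$ is monotone in each factor,
$$U\subset XW\subset\Bigl(\prod_{i=1}^n Z_i\Bigr)W=Z_1\cdots Z_n W,\qquad V\subset YW\subset Z_1\cdots Z_n W.$$
Thus $U$ and $V$ are both atoms lying in one common product of $n+1$ atoms, so $U\z V$. The symmetric argument with $W$ on the left handles $U\in W\circ X$, $V\in W\circ Y$.

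For the passage to $\z^*$, suppose $X\z^* Y$ and choose a chain $X=X_0\z X_1\z\cdots\z X_m=Y$. Fix $W\in\cQ(R)$, $U\in X\circ W$, and $V\in Y\circ W$. Because $R$ is entire, each subspace $X_iW$ is nonzero and so contains at least one atom; pick $U_i\in X_i\circ W$ for $0<i<m$, and set $U_0=U$, $U_m=V$. Applying the previous paragraph to each consecutive pair $X_{i-1}\z X_i$ with the atoms $U_{i-1},U_i\in X_{i-1}\circ W,\,X_i\circ W$ yields $U_{i-1}\z U_i$, hence $U\z^* V$ by transitivity. The left-sided case is identical. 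This gives strong regularity of $\z^*$.

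The only genuine obstacle is the intermediate chain step, which requires producing atoms $U_i\in X_i\circ W$ at each stage; this is precisely where the entire hypothesis enters, and per Remark~\ref{R:entire} it comes for free as soon as one is interested in the weakly reproducible case to which the monoid/group statement of the preceding theorem is applied. Everything else is a straightforward monotonicity observation.
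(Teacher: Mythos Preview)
Your argument is correct and follows essentially the same route as the paper: first verify strong regularity for $\z$ by the monotonicity inclusion $U,V\subset Z_1\cdots Z_n W$, then thread a chain $U_0,\dots,U_m$ through a $\z$-chain from $X$ to $Y$ by picking $U_i\in X_i\circ W$. Your explicit remark that entireness is what guarantees each $X_i\circ W\ne\varnothing$ is a good clarification---the paper leaves this implicit, since entireness is a standing hypothesis in that section.
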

\begin{proof} First, suppose that $X\z Y$, so $X,Y\subset\prod_{i=1}^n
  Z_i$ for some $Z_i$'s.  If $U\in X\circ W$ and $V\in Y\circ W$, then
  $U\subset XW$ and $V\subset YW$.  Thus, $U,V\subset (\prod_{i=1}^n
  Z_i)W$, i.e., $U\z W$.  If $X\z^* Y$, then there exists
  $X_0,\dots,X_s\in\cQ(R)$ with $X=X_0$, $Y=X_s$, and $X_i\z X_{i+1}$
  for all $i$.  Taking $U_i\in X_i\circ W$  with
  $U=U_0$ and $V=U_s$, the previous case shows that $U_i\z U_{i+1}$
  for all $i$, i.e., $U\z^* V$.  The opposite direction in the
  definition of strong regularity is proved similarly.
\end{proof}

We now verify that the induced binary operation makes $Q(R)$ into a
monoid.  The identity is given by $\bar{k}$; indeed, this follows
immediately from the fact that $k\circ X=X=X\circ k$.  Next, we check
that $(\bX\circ\bY)\circ\bZ=\bX\circ(\bY\circ\bZ)$.  Choose $U\in
X\circ Y$ and $V\in U\circ Z$, so that $\bV=(\bX\circ\bY)\circ\bZ$.
Since $U\subset XY$, $V\subset UZ\subset XYZ$.  Similarly, choosing
$T\in Y\circ Z$ and $W\in X\circ T$ gives $\bW=\bX\circ(\bY\circ\bZ)$
and $W\subset XT\subset XYZ$.  By definition, $V\z W$, so $Q(R)$ is
associative.

\begin{rmk}  If we allow $R$ to be an atomistic hemiring of $S(A)$,
  i.e., we do not require that $k\in R$, then the same argument shows
  that $Q(R)$ is a semigroup.
\end{rmk}

Finally, assume that $R$ is weakly reproducible.  Given $X\in \cQ(R)$,
choose $Y,Z$ such that $k\subset XY\cap ZX$.  By definition of the
product on $Q(R)$, we obtain $\bX\circ\bY=\bar{k}=\bZ\circ \bX$, so
$\bX$ is left and right invertible.  This shows that $Q(R)$ is a group
and finishes the proof of the theorem.

\begin{rmk} Any monoid can be realized as the condensation monoid of
  an atomistic subsemiring.  Indeed, given a monoid $M$, let $kM$ be
  the corresponding monoid algebra over $k$ with basis elements
  $\{e_x|x\in M\}$.  Let $R=\{\spa\{e_x\mid x\in F\}\mid F\subset
  M\}$.  This is an entire atomistic subsemiring of $S(kM)$ with
  $\cQ(R)=\{ke_x\mid x\in M\}$.  It is now easy to see that $Q(R)=M$.
\end{rmk}

\section{The focus and the focal subalgebra}\label{S:focus}

Recall that if $\cH$ is a hypergroup, the \emph{heart} $\om_{\cH}$ of
$\cH$ is the kernel of the canonical homomorphism
$\phi:\cH\to\cH/\b^*$; it is a subhypergroup of $\cH$.  Returning to
the context of Proposition~\ref{Prop:mf}, let $A$ be a
multiplicity-free $G$-algebra with no proper, nonzero left or right
invariant ideals.  We may then use the heart $\om$ of the hypergroup
$\cQ_G(A)$ to define an invariant subalgebra with the same properties.

\begin{prop} Let $A$ be a multiplicity-free $G$-algebra with no
  proper, nontrivial one-sided invariant ideals.  Then
  $B=\sum \{X\mid X\in\om\}$ is a multiplicity-free $G$-subalgebra with no
  proper one-sided invariant ideals.
\end{prop}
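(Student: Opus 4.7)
The plan is to verify each required property in turn, with the key tool being the fact that the heart $\om$ is a subhypergroup of $\cQ_G(A)$ (so $\om$ is closed under $\circ$ and satisfies the reproductive law internally). First I would dispose of the easy properties: each $X\in\om$ is a $G$-submodule of $A$, so their sum $B$ is a $G$-submodule; since $A$ is multiplicity-free, every $G$-submodule is multiplicity-free, so $B$ inherits this property; and $k=k1_A$ is the scalar identity of $\cQ_G(A)$, hence maps to the identity of $\cQ_G(A)/\b^*$ and belongs to $\om$, giving $k\subset B$.

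Next I would establish that $B$ is closed under multiplication. It suffices to show $XY\subset B$ for $X,Y\in\om$. As in the proof of Proposition~\ref{Prop:mf}, multiplicity-freeness of $A$ gives $XY=\sum_{Z\in X\circ Y}Z$. For any $Z\in X\circ Y$, the multiplication induced on $\cQ_G(A)/\b^*$ yields $\bZ=\bX\cdot\bY=\bar{k}\cdot\bar{k}=\bar{k}$, so $Z\in\om$ and hence $Z\subset B$. Summing, $XY\subset B$, so $BB\subset B$.

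Finally, for the one-sided ideal property, suppose $I\ne 0$ is a left $G$-invariant ideal of $B$. Since $I$ is a subrepresentation of the multiplicity-free module $B$, we may write $I=\sum_{Z\in J}Z$ for some nonempty $J\subset\om$. Fix $Z_0\in J$. Given any $Y\in\om$, the reproductive law within the subhypergroup $\om$ produces $X\in\om$ with $Y\in X\circ Z_0$, i.e.\ $Y\subset XZ_0\subset BI\subset I$. By multiplicity-freeness of $B$, $Y$ must appear as one of the summands, so $Y\in J$. Hence $J=\om$ and $I=B$; the right ideal case is symmetric, using the right-handed reproductive law in $\om$. The main obstacle is this last step, and the crux is simply identifying that the internal reproductive law of $\om$ (a standard property of hearts of hypergroups) is exactly what is needed to pass from a single atom $Z_0\in J$ to all of $\om$.
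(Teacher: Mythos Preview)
Your proof is correct and follows essentially the same route as the paper's. The only minor difference is in the final step: the paper phrases the conclusion as $BX=B=XB$ for every $X\in\om$ and derives the needed reproductive law in $\om$ on the spot (choosing $Y\in\cQ_G(A)$ with $Z\in Y\circ X$ via reproducibility of the whole hypergroup and then checking $\phi(Y)=1$), whereas you invoke the reproductive law in $\om$ as a standard property of the heart and use it to show directly that any nonzero invariant one-sided ideal equals $B$; these are equivalent arguments.
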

\begin{proof} It is trivial that $B$ is a multiplicity-free
  subrepresentation that contains $k$.  Moreover, if $X,Y\in\om$
  and $Z\subset XY$ is irreducible, then $\phi(Z)=\phi(X)\phi(Y)=1$,
  i.e., $Z\in\om$.  This means that $Z$ and hence $XY$ are subspaces
  of $B$.  It remains to show that $BX=B=XB$ for any $X\in\om$.
  Choose $Z\in\om$.  Since $\cQ_G(A)$ is a hypergroup, there exists
  $Y$ irreducible such that $Z\in Y\circ X$.  Since
  $1=\phi(Z)=\phi(Y)\phi(X)=\phi(Y)$, we see that $Y\in\om$, so
  $Z\subset BX$.  The proof that $Z\subset XB$ is similar.
\end{proof}

This result allows us to iterate the construction of the condensation
group.  Indeed, the hypergroup structure on $\cQ_G(B)=\om$ gives rise
to the group $Q_G(B)$ and an invariant subalgebra $B'\subset B$ such
that $\cQ_G(B')$ is again a hypergroup.  See Section~\ref{S:Galgs} for
examples.

Motivated by this situation, we make the following definitions.
\begin{defn} Let $R$ be an entire atomistic subsemiring.
  \begin{enumerate} \item The \emph{focus} $\varpi_R$ of $R$ is the
    kernel of the homomorphism $\psi_R:\cQ(R)\to Q(R)$.  Equivalently,
    it is the equivalence class of $k$.
\item The subspace $F(R)=\sum \{X\mid X\in\varpi_R\}\in R$ is called the
  \emph{focal subalgebra} associated to $R$.
\end{enumerate}
\end{defn}

We can now state one of the main results of the paper.

\begin{thm} \label{T:focus} Let $R$ be an entire atomistic subsemiring of $S(A)$.
\begin{enumerate}\item \label{part1}The focal subspace $F(R)$ is a unital subalgebra of $A$.
\item \label{part2} The sublattice $[0,F(R)]\subset R$ is an entire
  atomistic subsemiring of $S(F(A))$ with $\varpi_R\subset
  \cQ([0,F(R)])$.
\item \label{repro} If $R$ is weakly reproducible and has finite length, then
  $\cQ([0,F(R)])=\varpi_R$.
\item \label{part4} If $R$ is weakly reproducible (resp. reproducible)
  of finite length, then the same holds for $[0,F(R)]$.
\end{enumerate}
\end{thm}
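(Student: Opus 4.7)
The plan is to handle the four parts sequentially. Parts~\ref{part1} and~\ref{part2} are direct consequences of the definitions and the monoid structure on $Q(R)$, while parts~\ref{repro} and~\ref{part4} require the weak reproducibility and finite-length hypotheses. For part~\ref{part1}, first note $k\in\varpi_R$ (as $\bar k$ is the identity of $Q(R)$), so $1_A\in F(R)$. For closure under products, write $F(R)\cdot F(R)=\sum_{X,Y\in\varpi_R}XY$ and observe that any $R$-atom $Z\subset XY$ lies in $X\circ Y$, so $\bar Z=\bar X\bar Y=\bar k$ and thus $Z\in\varpi_R$. Since $R$ is atomistic, each $XY$ is the join of its $R$-atoms, so $XY\subset F(R)$.

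For part~\ref{part2}, the interval $[0,F(R)]$ is closed under arbitrary sums bounded by $F(R)$, arbitrary intersections, and finite products (by part~\ref{part1}); it contains $0$ and $k$. Any $E\in[0,F(R)]$ is a join of its $R$-atoms, each sitting inside $F(R)$ and hence minimal nonzero in $[0,F(R)]$; conversely, an atom of $[0,F(R)]$ must itself be an $R$-atom, since any strictly smaller $R$-atom would also lie in $[0,F(R)]$. Thus
\[\cQ([0,F(R)])=\{X\in\cQ(R)\mid X\subset F(R)\},\]
which manifestly contains $\varpi_R$; entireness is inherited from $R$.

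Part~\ref{repro} is the main obstacle. I must show every $R$-atom $Y\subset F(R)$ lies in $\varpi_R$. By weak reproducibility, pick $Z\in\cQ(R)$ with $k\subset YZ$; every atom of $YZ$ has class $\bar Y\bar Z$ in $Q(R)$, and since $k$ is among them, $\bar Z=\bar Y^{-1}$ and $YZ\subset F(R)$. The delicate point is that $Y\subset F(R)$ does not directly force $\bar Y=\bar k$, because an atom contained in the subspace sum $X_1+\cdots+X_n$ (finite by finite length) need not coincide with any summand. I would attack this via the \emph{complete subsets} machinery of Section~\ref{S:complete}: finite length makes $\cQ(R)$ and hence $Q(R)$ finite, so the image of $\cQ([0,F(R)])$ in $Q(R)$ is a finite submonoid, already closed under the multiplicative structure from part~\ref{part1}; a completeness argument should then force this submonoid to collapse to $\{\bar k\}$. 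Making this collapse rigorous is the technical heart of the proof.

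Given part~\ref{repro}, part~\ref{part4} is routine: finite length transfers since any chain in $[0,F(R)]$ is a chain in $R$. For weak reproducibility of $[0,F(R)]$, given $Y\in\cQ([0,F(R)])=\varpi_R$, weak reproducibility of $R$ supplies $Z\in\cQ(R)$ with $k\subset YZ$, and the computation $\bar Z=\bar Y^{-1}=\bar k$ places $Z\in\varpi_R$; the symmetric argument handles left inverses. For reproducibility, given $X,W\in\varpi_R$ the equation $W\in X\circ Y$ has a solution $Y\in\cQ(R)$, and $\bar Y=\bar X^{-1}\bar W=\bar k$ gives $Y\in\varpi_R$, so reproducibility also transfers.
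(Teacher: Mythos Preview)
Your treatment of parts~\ref{part1}, \ref{part2}, and~\ref{part4} is correct and matches the paper's arguments essentially verbatim.

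Part~\ref{repro}, however, has a genuine gap, and the approach you sketch is flawed. Your claim that ``finite length makes $\cQ(R)$ and hence $Q(R)$ finite'' is false: finite length of the lattice $R$ does not bound the number of atoms. The paper's own Example~\ref{E:A4} exhibits this explicitly---$S_{A_4}(\End(W))$ is weakly reproducible of finite length (since $\dim\End(W)=9$), yet $\cQ_{A_4}(\End(W))$ contains a $\mathbf{P}^1(\C)$-family of atoms $U_{[a:b]}$. So the finiteness-of-submonoid strategy collapses at the outset, and you are left with no argument for why an atom $Y\subset F(R)$ must satisfy $\bar Y=\bar k$.

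The paper's route is quite different. The key technical fact, established via the complete-subset machinery of Section~\ref{S:complete}, is Theorem~\ref{T:trans}: under weak reproducibility, $X\z^* k$ implies $X\z k$, i.e., every element of $\varpi_R$ sits together with $k$ inside a \emph{single} product $\prod Y_j$ of atoms. Granting this, one shows (using finite length only to run a minimal-counterexample argument on the interval $[E,F(R)]$) that $F(R)$ itself equals some product $\prod_{i=1}^n X_i$ of atoms. Since $k\subset F(R)$, any atom $Z\subset F(R)=\prod X_i$ then satisfies $Z\z k$ by definition, giving $Z\in\varpi_R$. Finite length is used for the chain argument, not to force any finiteness on $\cQ(R)$ or $Q(R)$.
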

We remark that part~\eqref{repro} is very useful in computations as it
is often easier to calculate $F(R)$ than to compute $\varpi_R$
directly.

We will only prove the first two parts of the theorem now.  The proof
of the other parts requires a more detailed study of the relation
$\z^*$ and will be given at the end of Section~\ref{S:complete}.

\begin{proof}[Proof of parts~\eqref{part1} and \eqref{part2}]

  If $X,Y\in\varpi_R$ and $Z\in X\circ Y$, then
  $1=\psi(X)\psi(Y)=\psi(Z)$.  This means that $Z\in\varpi$, so
  $XY\subset F(R)$.  Since $k\subset F(R)$, $F(R)$ is a subalgebra.
  This implies that $F(R)^2=F(R)$, so if $E,E'\in [0,F(R)]$, then
  $E+E'\subset F(R)$ and $EE'\subset F(R)$.  Thus, the closed
  sublattice $[0,F(R)]\subset R$ is a subsemiring of $R$, and it is
  immediate that it is entire and atomistic.  The atoms of $[0,F(R)]$
  are precisely the atoms of $R$ which are contained in $F(R)$, so
  $\varpi_R\subset \cQ([0,F(R)])$.
\end{proof}

\begin{cor} \label{C:max} If $R$ is weakly reproducible and has finite
  length, then $Q(R)=1$ if and only if $F(R)$ is the maximum element
  of $R$, i.e., $[0,F(R)]=R$.
\end{cor}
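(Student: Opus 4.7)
The plan is to reduce the corollary directly to part~\eqref{repro} of Theorem~\ref{T:focus}, together with the observation that an atomistic subsemiring is determined by its set of atoms. The corollary asserts two equivalent statements about $R$, and the bridge between them is the identification of $\varpi_R$ with $\cQ([0,F(R)])$.

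First I would rewrite the condition $Q(R)=1$ in terms of the focus. By definition, $\varpi_R$ is the $\z^*$-equivalence class of $k$, which is the identity element of $Q(R)=\cQ(R)/\z^*$. Hence $Q(R)=1$ if and only if $\cQ(R)=\varpi_R$, i.e., every atom of $R$ is $\z^*$-equivalent to $k$. Next I would rewrite the condition $[0,F(R)]=R$ in terms of atoms: since $[0,F(R)]\subset R$ and both are atomistic sublattices, the inclusion is an equality if and only if $\cQ([0,F(R)])=\cQ(R)$. Equivalently, since the maximum element of any atomistic subsemiring is the join of all its atoms, $[0,F(R)]=R$ amounts to saying that $F(R)$ is the maximum element of $R$.

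Now I would invoke Theorem~\ref{T:focus}\eqref{repro}, whose hypotheses (weak reproducibility and finite length) match those of the corollary. That result gives the crucial identity
\[
\cQ([0,F(R)])=\varpi_R.
\]
Combining the two reformulations with this identity yields the chain of equivalences
\[
Q(R)=1 \iff \cQ(R)=\varpi_R \iff \cQ(R)=\cQ([0,F(R)]) \iff [0,F(R)]=R,
\]
which is what the corollary asserts.

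There is no real obstacle here; the proof is essentially a bookkeeping argument that ties together the definitions of $Q(R)$, $\varpi_R$, and $F(R)$ with the substantive content of Theorem~\ref{T:focus}\eqref{repro}. The only point requiring a word of justification is the equivalence $[0,F(R)]=R\Leftrightarrow\cQ([0,F(R)])=\cQ(R)$, which follows because $R$ is atomistic: every element of $R$ is a join of atoms, so two atomistic sublattices of $R$ coincide precisely when their atom sets do.
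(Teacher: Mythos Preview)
Your proof is correct and follows essentially the same route as the paper: both reduce the equivalence to the identity $\varpi_R=\cQ(R)$ via part~\eqref{repro} of Theorem~\ref{T:focus}. One small difference worth noting: the paper handles the forward implication ($Q(R)=1\Rightarrow F(R)$ is maximal) without invoking part~\eqref{repro} at all, since once $\varpi_R=\cQ(R)$ the definition $F(R)=\sum\{X\mid X\in\varpi_R\}$ already exhibits $F(R)$ as the join of all atoms; this is why the paper remarks afterward that the forward direction holds for any entire atomistic subsemiring, without the weak reproducibility or finite-length hypotheses. Your symmetric use of part~\eqref{repro} is valid but slightly obscures this.
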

\begin{proof}  If $Q(R)=1$, then $\varpi_R=\cQ(R)$.  Thus,
  $F(R)$ contains every atom in $R$, hence is the maximum element of
  $R$.  Conversely, if $F(R)$ is the maximum of $R$, then
  part~\eqref{repro} of the theorem
  implies that $\varpi_R=\cQ(R)$.  This gives $Q(R)=1$.
\end{proof}
\begin{rmk} The forward implication in the corollary holds for any
  entire atomistic subsemiring.
\end{rmk}
 
The theorem shows that we can iterate the construction of the
invariants associated to $R$.
\begin{defn} The higher foci, focal subalgebras, and condensation
  monoids (or groups) for $R$ are defined recursively as follows:
\begin{itemize} \item $\varpi^1_R=\varpi_R$, $F^1(R)=F(R)$, and
  $Q^1(R)=Q(R)$;
\item $\varpi^{n+1}_R=\varpi_{[0,F^n(R)]}$, $F^{n+1}(R)=F([0,F^n(R)])$,
  and $Q^{n+1}(R)=Q([0,F^n(R)])$.
\end{itemize}
\end{defn}

We observe that if $R$ is weakly reproducible and has finite length,
then $Q^n(R)$ is a group for all $n$.

\section{Complete subsets of $\cQ(R)$}\label{S:complete}

In order to prove Theorem~\ref{T:focus}, we need a better
understanding of the equivalence relation $\z^*$.  In this section, we
define \emph{complete subsets} of $\cQ(R)$ and use them to investigate
the $\z^*$-equivalence classes.  Our analysis of $\z^*$ follows a
similar pattern to that of $\b^*$ carried out by Corsini and
Freni~\cite{Cor,Freni91}.  In the end, we will show that if $R$ is weakly
reproducible, then every element of $\varpi_R$ is $\z$-related (and
not just $\z^*$-related) to $k$; this will be the key ingredient in
the proof of Theorem~\ref{T:focus}.

\begin{defn}\mbox{}  \begin{enumerate} \item A subset $E\subset
    \cQ(R)$ is called \emph{complete} if for all $X_1,\dots, X_n\in
    \cQ(R)$, if there exists $X\in E$ such that $X\subset\prod_{i=1}^n
    X_i$, then for any $Y\subset\prod_{i=1}^n X_i$, $Y\in E$.
\item If $E$ is a nonempty subset of $\cQ(R)$, then the intersection
  of all complete subsets containing $E$ is denoted by $\cC(E)$; it is
  called the \emph{complete closure} of $E$.
\end{enumerate}
\end{defn}

It is obvious that $\cC(E)$ is the smallest closed subset containing $E$.

\begin{rmk}  This is not the usual notion of a complete subset of a
  semihypergroup~\cite{Cor,Koskas}, though it coincides in the context of
  Proposition~\ref{Prop:mf}.  In this paper, we only consider completeness in
  the sense given above.
\end{rmk}

The basic examples of closed subsets are the $\z^*$-equivalence classes.

\begin{prop}  Any $\z^*$-equivalence class is closed.
\end{prop}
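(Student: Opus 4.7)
The proof should be essentially immediate from unwinding the definitions, so my plan is very short. The key observation is that the relation $\zeta$ is defined precisely by the condition that two atoms sit inside a common product $\prod_{i=1}^n Z_i$, while completeness (closedness) is defined by the condition that whenever an atom of $E$ sits inside such a product, every atom inside that product lies in $E$. These two conditions match up on the nose once one passes to an equivalence class.

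Concretely, I would fix a $\zeta^*$-equivalence class $\bar{X}$ and suppose we are given $X_1,\dots,X_n\in\cQ(R)$ together with some $X'\in\bar{X}$ such that $X'\subset\prod_{i=1}^n X_i$. Let $Y\in\cQ(R)$ be any atom contained in $\prod_{i=1}^n X_i$. Then by the very definition of $\zeta$, we have $X'\zeta Y$, hence $X'\zeta^* Y$. Since $X'$ already belongs to the equivalence class $\bar{X}$ and $\zeta^*$ is transitive, $Y\in\bar{X}$ as well. This is exactly the condition required for $\bar{X}$ to be complete.

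There is really no obstacle here — the proposition is a direct consequence of the fact that the relation used to define $\zeta^*$ is ``tailored'' to the notion of completeness. If anything, the only thing to double-check is that we do not need to invoke the transitive closure in a nontrivial way: the witness $X'$ and the candidate $Y$ are already $\zeta$-related (not merely $\zeta^*$-related) because they sit inside a single product, so the passage to $\zeta^*$ is used only to pull $Y$ into the equivalence class of the originally chosen representative of $\bar{X}$.
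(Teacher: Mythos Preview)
Your proof is correct and matches the paper's own argument essentially verbatim: both fix an element of the class, observe that any other atom in the same product is $\zeta$-related to it, and then use transitivity of $\zeta^*$ to conclude.
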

\begin{proof}  Consider the class of $Z$.  Suppose that $X\z^* Z$ and
  $X,Y\subset\prod_{i=1}^n X_i$.  Then $Y\z X$, so $Y\z^* Z$.
\end{proof}

The complete closure may be computed inductively.  Indeed, given
$E\ne\varnothing$, define a sequence of subsets $\k_n(E)\subset\cQ(R)$
recursively as follows:  $\k_1(E)=E$ and 
\begin{equation*} \k_{n+1}(E)=\{X\in\cQ(R)\mid \exists
  Y_1,\dots,Y_s\in\cQ(R)\text{ and } Y\in \k_n(E)\text{ such
    that } X,Y\subset \prod_{i=1}^s Y_i\}.
\end{equation*}
Set $\k(E)=\cup_{n\ge 1}\k_n(E)$.

\begin{prop}\label{P:closure} For any nonempty $E\subset \cQ(R)$, $\cC(E)=\k(E)$.
\end{prop}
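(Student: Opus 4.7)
The plan is to prove the two inclusions $\k(E)\subset\cC(E)$ and $\cC(E)\subset\k(E)$ separately, after first checking a preparatory fact.

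The preparatory step is to verify that an arbitrary intersection of complete subsets of $\cQ(R)$ is again complete. This is immediate from the definition: if $X\in\bigcap_\alpha C_\alpha$ lies inside some $\prod_{i=1}^n X_i$, then $X\in C_\alpha$ for every $\alpha$, so any $Y\subset\prod_{i=1}^n X_i$ lies in every $C_\alpha$, hence in the intersection. Consequently $\cC(E)$ is itself a complete subset, i.e.\ the smallest complete subset containing $E$.

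For the inclusion $\k(E)\subset\cC(E)$, I would induct on $n$ to show $\k_n(E)\subset\cC(E)$. The base case $\k_1(E)=E\subset\cC(E)$ is by definition. For the inductive step, given $X\in\k_{n+1}(E)$, the definition supplies $Y_1,\dots,Y_s\in\cQ(R)$ and $Y\in\k_n(E)$ with $X,Y\subset\prod_{i=1}^s Y_i$. By induction $Y\in\cC(E)$, and since $\cC(E)$ is complete, $X\in\cC(E)$ as well.

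For the reverse inclusion $\cC(E)\subset\k(E)$, it suffices to prove that $\k(E)$ is a complete subset containing $E$; minimality of $\cC(E)$ then gives the inclusion. Containment of $E=\k_1(E)$ is immediate. To verify completeness, suppose $X\in\k(E)$ with $X\subset\prod_{i=1}^n X_i$ and let $Y\in\cQ(R)$ satisfy $Y\subset\prod_{i=1}^n X_i$. Choose $m$ with $X\in\k_m(E)$; then taking the products $X_1,\dots,X_n$ and the element $X\in\k_m(E)$ as witnesses, the defining clause for $\k_{m+1}(E)$ is satisfied by $Y$, so $Y\in\k_{m+1}(E)\subset\k(E)$.

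There is no genuine obstacle here; the recursive definition of $\k_{n+1}(E)$ is engineered to inject exactly the elements forced in by the completeness condition, so the main task is just to organize the indexing carefully in the inductive argument.
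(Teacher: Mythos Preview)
Your proof is correct and follows essentially the same approach as the paper: both show that $\k(E)$ is complete (giving $\cC(E)\subset\k(E)$) and that each $\k_n(E)$ lies in any complete set containing $E$ (giving $\k(E)\subset\cC(E)$). The only cosmetic difference is that the paper proves $\k_n(E)\subset F$ for an arbitrary complete $F\supset E$ rather than first checking that $\cC(E)$ itself is complete and then specializing to $F=\cC(E)$; the content is the same.
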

\begin{proof} Suppose $Y\in\k(E)$, say $Y\in \k_n(E)$, and $X,Y\subset
  \prod_{i=1}^s Y_i$.  Then $X\in\k_{n+1}(E)\subset\k(E)$, so $\k(E)$
  is complete.  Since $E\subset\k(E)$, $\cC(E)\subset\k(E)$.
  Conversely, suppose that $F\supset E$ and $F$ is complete.  We show
  inductively that $\k_n(E)\subset F$.  This is obvious for $n=1$.
  Suppose $\k_n(E)\subset F$.  If $X\in\k_{n+1}(E)$, then we can find
  $Y_1,\dots, Y_s\in\cQ(R)$ and $Y\in\k_n(E)$ such that $X,Y\subset
  \prod_{i=1}^s Y_i$.  Completeness of $F$ now shows that $X\in F$ as
  desired.
\end{proof}

We can now give a new characterization of $\z^*$.  Define a relation
$\mathbin{\k}$ on $\cQ(R)$ by $X\mathbin{\k} Y$ if and only if $X\in
\cC(Y)$, where $\cC(Y)=\cC(\{Y\})$.

\begin{thm}\label{T:kappa} The relations $\mathbin{\k}$ and $\z^*$ coincide.
\end{thm}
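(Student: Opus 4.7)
The plan is to use the two propositions already established in the section: the fact that every $\z^*$-equivalence class is a complete subset of $\cQ(R)$, together with Proposition~\ref{P:closure}, which characterizes $\cC(E)$ as the union of the iterated sets $\k_n(E)$. Once these are in hand, both inclusions $\mathbin{\k}\subset\z^*$ and $\z^*\subset\mathbin{\k}$ drop out essentially from the definitions, and no new construction is needed.

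For the inclusion $\mathbin{\k}\subset\z^*$, I would argue as follows. Fix $Y\in\cQ(R)$, and let $\bY$ denote its $\z^*$-equivalence class. By the proposition on $\z^*$-classes, $\bY$ is complete; since $Y\in\bY$, minimality of $\cC(Y)$ among complete subsets containing $Y$ yields $\cC(Y)\subset\bY$. Hence $X\mathbin{\k}Y$ implies $X\in\bY$, i.e., $X\z^* Y$. Alternatively, and in parallel with the original Corsini--Freni treatment of $\b^*$, one may prove this by induction on $n$ using Proposition~\ref{P:closure}: if $X\in\k_{n+1}(\{Y\})$, then there exist $Y_1,\dots,Y_s\in\cQ(R)$ and $Y'\in\k_n(\{Y\})$ with $X,Y'\subset\prod_{i=1}^s Y_i$; by definition $X\z Y'$, while the inductive hypothesis gives $Y'\z^* Y$, so $X\z^* Y$.

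For the reverse inclusion $\z^*\subset\mathbin{\k}$, suppose $X\z^* Y$ and choose a chain $X=X_0,X_1,\dots,X_m=Y$ in $\cQ(R)$ with $X_i\z X_{i+1}$ for each $i$. I would proceed by downward induction, using completeness of $\cC(Y)$ directly. Certainly $X_m=Y\in\cC(Y)$. Assuming $X_{i+1}\in\cC(Y)$, the relation $X_i\z X_{i+1}$ provides $Z_1,\dots,Z_s\in\cQ(R)$ such that $X_i,X_{i+1}\subset\prod_{j=1}^s Z_j$; completeness of $\cC(Y)$ then forces $X_i\in\cC(Y)$. Iterating gives $X=X_0\in\cC(Y)$, that is, $X\mathbin{\k}Y$.

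There is no real obstacle here, since both propositions of the section have already been proved; the content is purely book-keeping. The one point that must be handled correctly is the direction of induction in the second inclusion: one needs to start from $Y$ (which lies in $\cC(Y)$ trivially) and walk back along the $\z$-chain toward $X$, invoking completeness at each step, rather than attempting the reverse. With that ordering fixed, the argument is routine.
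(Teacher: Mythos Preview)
Your proof is correct. Both inclusions go through exactly as you describe: for $\mathbin{\k}\subset\z^*$ you invoke the already-proved proposition that every $\z^*$-class is complete, so $\cC(Y)\subset\bY$; for $\z^*\subset\mathbin{\k}$ you walk backwards along a $\z$-chain using completeness of $\cC(Y)$ at each step. There is no gap.

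Your route, however, is not the one the paper takes. The paper first proves a separate lemma establishing that $X\in\k_n(Y)$ iff $Y\in\k_n(X)$ (and a recursion $\k_{n+1}(X)=\k_n(\k_2(X))$), uses this to show directly that $\mathbin{\k}$ is an equivalence relation, and then argues as follows: since $X\z Y$ visibly gives $X\in\k_2(Y)$, we have $\z\subset\mathbin{\k}$, and transitivity of $\mathbin{\k}$ forces $\z^*\subset\mathbin{\k}$; for the reverse inclusion the paper unwinds $X\in\k_{n+1}(Y)$ into an explicit $\z$-chain $X=X_0,X_1,\dots,X_n=Y$ with $X_j\in\k_{n+1-j}(Y)$. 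Your argument bypasses the auxiliary lemma entirely by leaning on the proposition that $\z^*$-classes are complete; this makes your proof shorter and more self-contained, at the cost of not extracting the finer structural facts about the $\k_n$ (in particular their symmetry), which the paper records but does not use elsewhere.
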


Before beginning the proof, we will need a lemma.

\begin{lem}\mbox{}\begin{enumerate} \item \label{kk} For any $X\in\cQ(R)$ and $n\ge 2$,
    $\k_{n+1}(E)=\k_n(\k_2(X))$.
\item For $X,Y\in\cQ(R)$, $X\in \k_n(Y)$ if and only if $Y\in \k_n(X)$.
\end{enumerate}
\end{lem}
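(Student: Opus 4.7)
The plan is to extract one structural fact about the $\k_n$'s and then deduce both parts by a short induction. The structural fact, read directly from the recursion, is that for any nonempty $E\subset\cQ(R)$,
\[
\k_{m+1}(E) \;=\; \bigcup_{Y\in \k_m(E)} \k_2(Y),
\]
and, as an immediate consequence, $\k_n$ distributes over unions: $\k_n\bigl(\bigcup_\alpha E_\alpha\bigr)=\bigcup_\alpha \k_n(E_\alpha)$. A short induction on $n$ from the definition proves both. In particular, $\k_n(E)=\bigcup_{Y\in E}\k_n(Y)$, so the behavior on arbitrary sets is controlled by the singleton case.

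For part~(1) I would induct on $n\ge 2$. The base case $n=2$ is the computation
\[
\k_3(X) \;=\; \bigcup_{Y\in \k_2(X)}\k_2(Y) \;=\; \k_2(\k_2(X)),
\]
where the first equality is the structural fact and the second is distributivity. The inductive step is formally identical: assuming $\k_{n+1}(X)=\k_n(\k_2(X))$, one has
\[
\k_{n+2}(X) \;=\; \bigcup_{Y\in \k_{n+1}(X)}\k_2(Y) \;=\; \bigcup_{Y\in \k_n(\k_2(X))}\k_2(Y) \;=\; \k_{n+1}(\k_2(X)).
\]

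For part~(2) I would again induct on $n$. The cases $n=1$ and $n=2$ are immediate from the manifest symmetry in $X$ and $Y$ of the defining conditions ($X=Y$ and $X,Y\subset\prod_{i=1}^s Y_i$, respectively). For the inductive step, suppose $X\in\k_{n+1}(Y)$; unfolding the definition produces $Y_1,\dots,Y_s$ and $Z\in\k_n(Y)$ with $X,Z\subset\prod_{i=1}^s Y_i$. The $n=2$ base case applied to $X,Z$ yields $Z\in\k_2(X)$, and the inductive hypothesis at step $n$ applied to $Z,Y$ yields $Y\in\k_n(Z)$. Then $Y\in\k_n(Z)\subset \k_n(\k_2(X))$, which by part~(1) equals $\k_{n+1}(X)$, completing the symmetry.

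There is no genuine obstacle; the proof is bookkeeping atop the recursion. The one wrinkle worth flagging is that the inductive step of part~(2) genuinely requires part~(1): it is used to splice the two-step chain $X\leftrightarrow Z\leftrightarrow Y$ (with $Z\in\k_2(X)$ and $Y\in\k_n(Z)$) into a single membership of depth $n+1$, which is why part~(1) must be established first.
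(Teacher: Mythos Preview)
Your proof is correct and follows essentially the same route as the paper: both parts are handled by induction, and in part~(2) you unfold $X\in\k_{n+1}(Y)$ to obtain $Z\in\k_2(X)$ with $Y\in\k_n(Z)$ by induction, then invoke part~(1) to conclude $Y\in\k_n(\k_2(X))=\k_{n+1}(X)$. The only cosmetic difference is that you isolate the identity $\k_{m+1}(E)=\bigcup_{Y\in\k_m(E)}\k_2(Y)$ and its distributivity consequence as a preliminary observation, whereas the paper simply unfolds the recursion for $\k_n(\k_2(X))$ directly in the inductive step; the underlying computation is the same.
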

\begin{proof}  

  Note that $\k_{n}(\k_2(X))$ consists of those atoms $Z$ for which
  there exists $Y_i$'s and $Y\in\k_{n-1}(\k_2(X))$ such that
  $Y,Z\subset \prod_{i=1}^s Y_i$.  If $n=2$, then
  $\k_{n-1}(\k_2(X))=\k_2(X)$, and this is precisely the defining
  property of $\k_3(X)$.  If $n>2$, then $\k_{n-1}(\k_2(X))=\k_n(X)$
  by inductive hypothesis, and we see that such atoms are precisely
  the elements of $\k_{n+1}(X)$.  This proves part~\eqref{kk}.

  The second assertion is also proven by induction.  Suppose
  $X\in\k_2(Y)$.  Then there exist $Y_i$'s such that
  $X,Y\subset\prod_{i=1}^s Y_i$, so $Y\in\k_2(X)$.  Next, assume that
  the statement holds for $n$.  If $X\in\k_{n+1}(Y)$, then $X,Z\subset
  \prod_{i=1}^s Y_i$ for some $Y_i$'s and $Z\in\k_n(Y)$.  By
  definition, $Z\in\k_2(X)$, and $Y\in\k_n(Z)$ by induction.  Hence,
  $Y\in\k_n(\k_2(X))=\k_{n+1}(X)$.
\end{proof}

\begin{proof}[Proof of Theorem~\ref{T:kappa}]

  First, we show that $\mathbin{\k}$ is an equivalence relation.  It
  is clear that $\mathbin{\k}$ is reflexive.  If $X\mathbin{\k} Y$ and
  $Y\mathbin{\k} Z$, then $X\in\cC(Y)$ and $Y\in\cC(Z)$.  Since
  $\cC(Z)$ is complete and contains $Y$, $\cC(Y)\subset\cC(Z)$, so
  $X\in\cC(Z)$, i.e., $X\mathbin{\k} Z$.  Finally, if $X\mathbin{\k}
  Y$, then Proposition~\ref{P:closure} implies that $X\in\k_n(Y)$ for
  some $n$.  By the lemma, $Y\in\k_n(X)\subset \k(X)$, and another
  application of Proposition~\ref{P:closure} gives $Y\mathbin{\k} X$.

  Next, suppose that $X\z Y$.  Then $X,Y\subset\prod_{i=1}^s X_i$ for
  some $X_i$'s, so $X\mathbin{\k} Y$.  Since $\mathbin{\k}$ is an
  equivalence relation, it follows that $\z^*\subset \mathbin{\k}$.

Conversely, assume that $X\mathbin{\k} Y$, say $X\in\k_{n+1}(Y)$.  Set
$X_0=X$.  We recursively construct $X_j\in \k_{n+1-j}(Y)$ for $0\le
j\le n$ satisfying $X_j\mathbin{\k} X_{j+1}$.  Choose $X_1\in\k_n(Y)$
such that $X,X_1\subset\prod_{i=1}^{s_1} Y_{1,i}$ for some
$Y_{1,i}$'s.  This means that $X\z X_1$.  Suppose that we have
constructed the desired atoms up through $X_r$ with $r<n$.  Again, we
can choose $X_{r+1}\in \k_{n-r}(Y)$ satisfying
$X_r,X_{r+1}\subset\prod_{i=1}^{s_{r+1}} Y_{r+1,i}$ for some
$Y_{r+1,i}$'s, and this gives $X_r\z X_{r+1}$.  Note that
$X_n\in\k_1(Y)=\{Y\}$, i.e., $X_n=Y$.  We conclude that $X\z^* Y$ as
desired.

\end{proof}

\begin{cor} For any $E\subset \cQ(R)$ nonempty,
  $\psi^{-1}(\psi(E))=\cup_{X\in E} \cC(X)=\cC(E)$.  In particular, the
  $\z^*$-equivalence class of $X$ is $\cC(X)$.
\end{cor}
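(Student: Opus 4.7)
The plan is to derive the corollary directly from Theorem~\ref{T:kappa} together with the definition of complete closure. The key observation is that Theorem~\ref{T:kappa} identifies the relation $\mathbin{\k}$ (defined by $X\mathbin{\k} Y$ iff $X\in\cC(Y)$) with $\z^*$, which lets us translate $\z^*$-equivalence classes into complete closures.

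First, I would handle the ``in particular'' statement. By Theorem~\ref{T:kappa}, the $\z^*$-equivalence class of $X$ is $\{Y\mid Y\mathbin{\k} X\}=\{Y\mid Y\in\cC(X)\}=\cC(X)$. Since $\psi^{-1}(\psi(E))$ is by definition the union of the $\z^*$-equivalence classes of the elements of $E$, this immediately gives $\psi^{-1}(\psi(E))=\bigcup_{X\in E}\cC(X)$.

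The remaining task is the equality $\bigcup_{X\in E}\cC(X)=\cC(E)$. For the inclusion $\subseteq$, note that $\cC(E)$ is a complete subset containing every $X\in E$, and $\cC(X)$ is by definition the smallest complete subset containing $X$, so $\cC(X)\subseteq\cC(E)$ for each $X\in E$. For the reverse inclusion, I would verify that $F:=\bigcup_{X\in E}\cC(X)$ is itself complete and contains $E$, which forces $\cC(E)\subseteq F$. Containment of $E$ is immediate since $X\in\cC(X)$. For completeness, suppose $Y_1,\dots,Y_s\in\cQ(R)$ and some $Y\in F$ satisfies $Y\subset\prod_{i=1}^s Y_i$; pick $X_0\in E$ with $Y\in\cC(X_0)$. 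Then for any $Y'\subset\prod_{i=1}^s Y_i$, the completeness of $\cC(X_0)$ applied to the pair $(Y,Y')$ gives $Y'\in\cC(X_0)\subseteq F$.

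There is no real obstacle here; the content is entirely in Theorem~\ref{T:kappa}, which has already been proved. The corollary is just a repackaging: once one knows that $\z^*$-classes coincide with complete closures of single points, the description of $\psi^{-1}(\psi(E))$ as $\cC(E)$ follows from the elementary fact that a union of complete sets indexed by a generating family is the complete closure of that family.
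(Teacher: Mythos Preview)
Your proof is correct and follows essentially the same route as the paper's. The paper's argument is terser---it simply asserts that a union of complete (``closed'') subsets is complete to obtain the second equality---whereas you spell out that verification explicitly, but the logical structure is identical.
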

\begin{proof}  The set  $\psi^{-1}(\psi(E))$ consists of those atoms
  equivalent to an atom in $E$, hence is the union of the $\z^*=\mathbin{\k}$
  equivalence classes of atoms in $E$.  This gives the first
  equality.  The second follows immediately from the fact that a union
  of closed subsets is closed.
\end{proof}

To proceed further, we need to impose additional conditions on $R$.
\begin{prop} \label{P:replaw}\mbox{}\begin{enumerate} \item If $R$ is
    reproducible, then for all $X\in\cQ(R)$, $\cC(X)=\varpi_R\circ
    X=X\circ \varpi_R$.  In particular, the subhypergroupoid
    $\varpi_R$ satisfies the reproductive law.
  \item If $R$ is weakly reproducible, then $\varpi_R$ satisfies the
    weak reproductive law.
  \end{enumerate}
\end{prop}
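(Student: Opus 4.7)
The plan is to leverage the identification $\cC(X)=\psi^{-1}(\psi(X))$ from the corollary to Theorem~\ref{T:kappa}, together with the group structure on $Q(R)$, which for part~(1) is available once we observe that reproducibility of $R$ implies weak reproducibility.

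For part~(1), I would first point out that reproducibility implies weak reproducibility: since $k\in\cQ(R)=\cQ(R)\circ X=X\circ\cQ(R)$, there exist $Y,Z\in\cQ(R)$ with $k\in X\circ Y\cap Z\circ X$. Hence $Q(R)$ is a group. To prove $\cC(X)=\varpi_R\circ X$, I would check the two inclusions. For $\supseteq$: any $Z\in W\circ X$ with $W\in\varpi_R$ satisfies $\bZ=\bW\cdot\bX=\bar{k}\cdot\bX=\bX$, so $Z$ lies in the $\z^*$-class of $X$, i.e.\ $Z\in\cC(X)$. For $\subseteq$: if $Z\in\cC(X)$ then $\bZ=\bX$, while reproducibility supplies some $W\in\cQ(R)$ with $Z\in W\circ X$. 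Applying $\psi$ gives $\bX=\bW\cdot\bX$, and right-cancellation in the group $Q(R)$ forces $\bW=\bar{k}$, i.e.\ $W\in\varpi_R$. The equality $\cC(X)=X\circ\varpi_R$ follows by the symmetric argument using $\cQ(R)=X\circ\cQ(R)$. Specializing to $X\in\varpi_R$ yields $\cC(X)=\cC(k)=\varpi_R$, hence $\varpi_R\circ X=\varpi_R=X\circ\varpi_R$, which is the reproductive law on the subhypergroupoid $\varpi_R$.

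For part~(2), given $X\in\varpi_R$, weak reproducibility of $R$ supplies $Y,Z\in\cQ(R)$ with $k\in X\circ Y$ and $k\in Z\circ X$. Applying $\psi$ to $k\subset XY$ gives $\bar{k}=\bX\cdot\bY=\bar{k}\cdot\bY=\bY$, so $Y\in\varpi_R$; the argument for $Z$ is identical. This is precisely the weak reproductive law for $\varpi_R$.

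The main point to watch is that part~(1) genuinely uses cancellation in the group $Q(R)$, which makes the preliminary observation that reproducibility implies weak reproducibility (and so makes $Q(R)$ a group) the essential ingredient. By contrast, part~(2) needs only that $\bar{k}$ is a two-sided identity in the monoid $Q(R)$, so no appeal to inverses is required there.
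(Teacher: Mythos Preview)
Your proof is correct and follows essentially the same route as the paper's: both directions use the identification $\cC(X)=\psi^{-1}(\psi(X))$, apply reproducibility to write the given atom as an element of some $W\circ X$ (or $X\circ U$), and then cancel in $Q(R)$ to force $W\in\varpi_R$. You are in fact more explicit than the paper on one point it leaves tacit: the cancellation $\bX=\bW\cdot\bX\Rightarrow\bW=\bar{k}$ requires $Q(R)$ to be a group, which you justify by first observing that reproducibility implies weak reproducibility.
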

\begin{proof} First, assume that $R$ is reproducible.  Suppose that
  $Y\in\cC(X)$, so $Y\z^* X$.  By reproducibility, there exist $U$
  such that $Y\subset XU$, i.e., $Y\in X\circ U$.  Hence,
  $\psi(Y)=\psi(X)\psi(U)$, so $\psi(U)=1$.  This shows that
  $U\in\varpi_R$, giving $Y\in X\circ\varpi_R$.  Conversely, if $Z\in
  X\circ\varpi_R$, then $\psi(Z)=\psi(X)$.  This means that
  $Z\in\cC(X)$.  The equality $\cC(X)=\varpi_R\circ X$ is proved in
  the same way.  When $X\in\varpi_R$, the first statement says that
  $\varpi_R=\varpi_R\circ X=X\circ\varpi_R$, which is the reproductive
  law.

If $R$ is weakly reproducible, then the argument given above (with
$Y=k$ and $X\in\varpi_R$) shows that there exists $U,V\in\varpi_R$
such that $k\subset U\circ X\cap X\circ V$ as desired.
\end{proof}

Given $Z\in\cQ(R)$, define \begin{equation*} M(Z)=\{X\in\cQ(R)\mid
  \exists Y_1,\dots,Y_s\in\cQ(R)\text{ such that } X,Z\subset
  \prod_{i=1}^s Y_i\}.
\end{equation*}

\begin{lem}  If $R$ is reproducible (resp. weakly reproducible), then
  $M(Z)$ is a complete part for all $Z$ (resp. for $Z=k$).
\end{lem}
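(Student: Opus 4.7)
The plan is to prove the lemma in two stages: first establish the weakly reproducible case ($Z = k$) by explicitly constructing a common product of atoms containing $Y$ and $k$, then derive the reproducible case for general $Z$ as a formal consequence using Proposition~\ref{P:replaw}.

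For the weakly reproducible case, suppose $X, k \subset P = W_1 \cdots W_m$ witnesses $X \in M(k)$, and $X, Y \subset Q = X_1 \cdots X_n$. I would use weak reproducibility to pick an atom $X^- \in \cQ(R)$ with $k \subset X^- X$, and claim that the atom product $\pi = X^- W_1 \cdots W_m X_1 \cdots X_n = X^- P Q$ contains both $Y$ and $k$. The inclusion $Y \subset \pi$ is straightforward: from $X \subset P$ one gets $k \subset X^- X \subset X^- P$, so $(X^- P) Q \supset kQ = Q \supset Y$. The inclusion $k \subset \pi$ is the crucial step: starting from an identity $1_A = \sum_i c_i x_i^- x_i$ coming from $k \subset X^- X$, and using that $1_A \in P$ (from $k \subset P$) and $x_i \in X \subset Q$, I would rewrite each summand as $x_i^- \cdot 1_A \cdot x_i \in X^- \cdot P \cdot Q$, realizing $1_A$ inside $\pi$. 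This gives $Y \z k$, so $Y \in M(k)$ and $M(k)$ is complete.

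For the reproducible case, given $X \in M(Z)$ with $X, Z \subset P$ and $X, Y \subset Q$, Proposition~\ref{P:replaw} supplies atoms $U, V \in \varpi_R$ with $X \subset UZ$ and $Y \subset VX$, hence $Y \subset VUZ$. Since reproducibility implies weak reproducibility, the first stage yields that $M(k)$ is complete and contains $k$, so $\varpi_R = \cC(k) \subseteq M(k)$. In particular $U, V \in M(k)$, so there exist atom products $\sigma$ and $\tau$ with $V, k \subset \sigma$ and $U, k \subset \tau$. Then $\sigma \tau Z$ is a common product: it contains $Y$ (since $V \subset \sigma$ and $U \subset \tau$ give $Y \subset VUZ \subset \sigma \tau Z$) and it contains $Z$ (since $k \subset \sigma$ and $k \subset \tau$ imply $k \subset \sigma \tau$, so $\sigma \tau Z \supset kZ = Z$). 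Hence $Y \z Z$.

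The main obstacle is the verification $k \subset \pi$ in the weakly reproducible case: the relation $k \subset X^-X$ is a two-factor identity $\sum_i c_i x_i^- x_i = 1_A$, whereas $\pi$ is a three-factor product, and the passage from one to the other requires the insertion of $1_A \in P$ as a middle factor. This is exactly where the hypothesis $Z = k \subset P$ enters; once it is established, the reproducible case for general $Z$ reduces formally to Proposition~\ref{P:replaw} plus the equality $M(k) = \varpi_R$ just obtained.
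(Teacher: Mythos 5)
Your proof is correct, but it is organized differently from the paper's. The paper handles both cases with a single uniform substitution argument: given $Y\in M(Z)$ with $Y,Z\subset\prod_{i}Y_i$ and $X,Y\subset\prod_{j}Z_j$, it uses reproducibility to choose $V$ with $Z\subset YV$ and $W$ with $Z_n\subset WZ$, and then checks directly that $X$ and $Z$ both lie in the one long product $\bigl(\prod_{j<n}Z_j\bigr)W\bigl(\prod_i Y_i\bigr)V$; for $Z=k$ it simply takes $W=Z_n$, so only a right quasi-inverse of the element of $M(k)$ is needed. Your $Z=k$ argument is the mirror image of this computation: you prepend a left quasi-inverse $X^-$ of $X$ and use $k\subset P$ and $X\subset Q$ to absorb the two given products into $X^-PQ$, which is essentially the paper's substitution trick arranged on the other side. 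Where you genuinely diverge is the general reproducible case: instead of repeating the substitution for arbitrary $Z$, you invoke Proposition~\ref{P:replaw} to write $Y\subset VUZ$ with $U,V\in\varpi_R$, use the already-established $Z=k$ case to get $\varpi_R=\cC(k)\subset M(k)$, and splice the resulting atom products containing $U,k$ and $V,k$ into $\sigma\tau Z$. This is logically sound---Proposition~\ref{P:replaw} is proved before the lemma and does not depend on it, and reproducibility does imply weak reproducibility---and it buys a clean reduction exhibiting the $Z=k$ case as the only real content; the cost is that it routes through the quotient map $\psi$ and the class $\varpi_R$, whereas the paper's proof is a purely combinatorial manipulation of products of atoms and treats both cases in one stroke.
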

\begin{proof} Assume that $R$ is reproducible.  Take $Y\in M(Z)$, so
  $Y,Z\subset \prod_{i=1}^s Y_i$ for some $Y_i$'s.  Suppose that
  $Y\subset \prod_{j=1}^n Z_j$.  By reproducibility, choose $V,W$ such
  that $Z\subset YV$ and $Z_n\subset WZ$.  Now, suppose that $X\subset
  \prod_{j=1}^n Z_j$ also.  Then \begin{equation*}X\subset
    \prod_{j=1}^n Z_j\subset \left(\prod_{j=1}^{n-1}
      Z_j\right)WZ\subset \left(\prod_{j=1}^{n-1} Z_j\right)WYV\subset
    \left(\prod_{j=1}^{n-1} Z_j\right)W\left(\prod_{i=1}^s
      Y_i\right)V.
\end{equation*}
On the other hand,
\begin{equation*} Z\subset YV\subset \left(\prod_{j=1}^n
    Z_j\right)V\subset \left(\prod_{j=1}^{n-1} Z_j\right) WZV\subset
  \left(\prod_{j=1}^{n-1} Z_j\right)W\left(\prod_{i=1}^s Y_i\right)V.
\end{equation*}
Thus, $Y\in M(Z)$, so $M(Z)$ is complete.

  If $R$ is weakly reproducible, then the same argument works with
  $Z=k$.  Indeed, we need only set $W=Z_n$ and use weak
  reproducibility to choose $V$ such that $k\subset YV$.
\end{proof}

\begin{cor}\label{Cor:m} If $R$ is reproducible (resp. weakly reproducible), then
  $M(Z)=\varpi_R$ for any $Z\in\varpi_R$ (resp. for $Z=k$).
\end{cor}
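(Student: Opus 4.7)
The plan is to show that $M(Z)$ coincides with the complete closure $\cC(Z)$, and then invoke the characterization of $\z^*$-equivalence classes as complete closures to identify $\cC(Z)$ with $\varpi_R$ in the relevant cases.

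First, I would observe the trivial containment $Z \in M(Z)$: taking $s = 1$ and $Y_1 = Z$ gives $Z \subset \prod_{i=1}^s Y_i$, so the condition defining $M(Z)$ is satisfied with $X = Z$. Combined with the preceding lemma, which states that $M(Z)$ is a complete subset of $\cQ(R)$ when $R$ is reproducible (for arbitrary $Z$) or when $R$ is weakly reproducible (for $Z = k$), this forces $\cC(Z) \subset M(Z)$ in the respective situations, since $\cC(Z)$ is by definition the smallest complete subset containing $Z$.

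For the reverse inclusion $M(Z) \subset \cC(Z)$, suppose $X \in M(Z)$, so that there exist $Y_1, \dots, Y_s$ with $X, Z \subset \prod_{i=1}^s Y_i$. By the definition of $\z$, this means $X \z Z$, hence $X \z^* Z$. By the Corollary to Theorem~\ref{T:kappa}, the $\z^*$-equivalence class of $Z$ is exactly $\cC(Z)$, so $X \in \cC(Z)$. Combining both inclusions yields $M(Z) = \cC(Z)$.

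To finish, I would identify $\cC(Z)$ with $\varpi_R$ in each case. By definition, $\varpi_R$ is the $\z^*$-equivalence class of $k$; under the reproducible hypothesis with $Z \in \varpi_R$, the equivalence class of $Z$ coincides with that of $k$, so $\cC(Z) = \varpi_R$. Under the weakly reproducible hypothesis with $Z = k$, the identification $\cC(k) = \varpi_R$ is immediate. Either way, $M(Z) = \varpi_R$, as required. I do not anticipate a substantive obstacle here: the statement is essentially a bookkeeping consequence of the preceding lemma and the characterization $\z^* = \mathbin{\k}$ from Theorem~\ref{T:kappa}, with the only care needed being to verify the trivial membership $Z \in M(Z)$ and to keep track of which of the two hypotheses is being used.
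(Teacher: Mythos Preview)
Your proof is correct and follows essentially the same approach as the paper: both use the preceding lemma to show $M(Z)$ is a complete subset containing $Z$ (hence $\cC(Z)\subset M(Z)$), and both use the observation that $X\in M(Z)$ means $X\z Z$ to obtain the reverse inclusion, then identify $\cC(Z)=\varpi_R$ when $Z\in\varpi_R$. The only cosmetic difference is that you first package the two inclusions as $M(Z)=\cC(Z)$ before identifying with $\varpi_R$, whereas the paper establishes $M(Z)\subset\varpi_R$ and $\varpi_R\subset M(Z)$ directly.
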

\begin{proof}  Suppose that $Z\in\varpi_R$.  If
  $X\in M(Z)$, then $X\z Z$ by definition, so $X\in\varpi_R$.  Thus,
  $M(Z)\subset \varpi_R$.  Conversely, the lemma shows that, under the
  hypothesis on $R$, $M(Z)$ is a complete subset containing $Z$, so
  $\cC(Z)=\varpi_R\subset M(Z)$.
\end{proof}

\begin{thm}\label{T:trans} \mbox{}\begin{enumerate}\item If $R$ is reproducible,
    then $\z$ is transitive.
\item If $R$ is weakly reproducible, then $X\z^* k$ implies that $X\z k$.
\end{enumerate}
\end{thm}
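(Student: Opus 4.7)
The plan is to recognize $M(Z)$ as the $\z$-class of $Z$ and $\cC(Z)$ as the $\z^*$-class of $Z$ (by the corollary to Theorem~\ref{T:kappa}), so that transitivity of $\z$ at $Z$ amounts to the equality $M(Z) = \cC(Z)$.  The completeness of $M(Z)$ supplied by the lemma preceding Corollary~\ref{Cor:m} will do the rest; no further appeal to the reproductive law is needed beyond what is packaged in that lemma, and the remaining work is a formal comparison of two closure operations built from the same basic step.

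For part~(1), assume $R$ is reproducible and fix $Z \in \cQ(R)$.  The inclusion $M(Z) \subset \cC(Z)$ is automatic: every $X \in M(Z)$ lies in $\k_2(Z)$ by definition of $M(Z)$, hence in $\cC(Z) = \k(Z)$ by Proposition~\ref{P:closure}.  For the reverse inclusion, the lemma preceding Corollary~\ref{Cor:m} asserts that $M(Z)$ is a complete part, and taking $s = 1$ with $Y_1 = Z$ shows $Z \in M(Z)$; therefore $\cC(Z) \subset M(Z)$.  Thus $M(Z) = \cC(Z)$, so the $\z$-class of $Z$ coincides with its $\z^*$-class.  Since $\z^*$ is by construction the transitive closure of $\z$, this equality forces $\z$ itself to be transitive.

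For part~(2), assume $R$ is weakly reproducible and $X \z^* k$.  The same argument runs verbatim with $Z = k$: the lemma still supplies completeness of $M(k)$ (which is all that is asserted in the weakly reproducible case), $k \in M(k)$ trivially, so $\cC(k) \subset M(k)$.  Since $\cC(k) = \varpi_R$ is the $\z^*$-class of $k$, we obtain $X \in \cC(k) \subset M(k)$, which by definition of $M(k)$ is precisely the statement $X \z k$.  The potential obstacle here---that one cannot establish completeness of $M(Z)$ for a general $Z$ under the weaker hypothesis---is sidestepped because we only ever need to work inside the $\z^*$-class of $k$.
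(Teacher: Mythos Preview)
Your proof is correct and takes a more streamlined route than the paper's.  Both arguments rest on the completeness lemma for $M(Z)$, but you use it differently.  You observe that $M(Z)=\k_2(Z)$ and that completeness of $M(Z)$ together with $Z\in M(Z)$ forces $\cC(Z)\subset M(Z)$; combined with the automatic inclusion $M(Z)=\k_2(Z)\subset\k(Z)=\cC(Z)$, this gives $M(Z)=\cC(Z)$, i.e., the $\z$-neighbors of $Z$ already exhaust its $\z^*$-class.  Transitivity then drops out formally.  The paper instead proceeds constructively: given $X\z^* Y$, it invokes Proposition~\ref{P:replaw} to write $Y\in X\circ U$ for some $U\in\varpi_R$, then uses Corollary~\ref{Cor:m} (i.e., $M(k)=\varpi_R$) to find a product containing both $U$ and $k$, and finally exhibits an explicit product containing both $X$ and $Y$.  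Your argument bypasses both Proposition~\ref{P:replaw} and Corollary~\ref{Cor:m} entirely, showing that the completeness lemma alone already carries the full weight of the theorem; the paper's route, on the other hand, yields a concrete witness for $X\z Y$ and makes visible how the focus $\varpi_R$ mediates the relation.
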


\begin{proof} Assume that $R$ is reproducible, and take $X\z^* Y$.  By
  Proposition~\ref{P:replaw}, there exists $U\in\varpi_R$ such that
  $Y\in X\circ U$.  Since $M(k)=\varpi_R$, Corollary~\ref{Cor:m}
  implies the existence of $Y_i$'s such that $U,k\subset\prod_{i=1}^s
  Y_i$.  Thus, $Y\subset XU\subset X\prod_{i=1}^s Y_i\supset Xk=X$, so
  $Y\z X$.  If $R$ is weakly reproducible, the same argument works for
  $Y=k$.
\end{proof}

We are now ready to return to the proof of Theorem~\ref{T:focus}.  We
first state a proposition.

\begin{prop} Let $R$ be weakly reproducible of finite length.  Then
  there exists $X_1,\dots, X_n\in\cQ(R)$ such that $F(R)=\prod_{i=1}^n
  X_i$.
\end{prop}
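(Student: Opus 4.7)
The plan is to use Theorem~\ref{T:trans}(2) to obtain, for each atom in $\varpi_R$, a single product of atoms containing both it and $k$, and then amalgamate these by a maximality argument whose termination is guaranteed by the finite length hypothesis.

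First I would invoke Theorem~\ref{T:trans}(2): since $R$ is weakly reproducible, every atom $X\in\varpi_R$ satisfies $X\z k$ (not merely $X\z^* k$), so there exist atoms $Y_1,\dots,Y_m\in\cQ(R)$ with $X,k\subset\prod_{j=1}^m Y_j$. This is the key upgrade over the definition of $\varpi_R$: we get a product of atoms that witnesses $\z$-equivalence of $X$ and $k$ in a single step.

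Next I would consider the collection
\[
\mathcal{P}=\bigl\{\textstyle\prod_{i=1}^n Z_i\,:\,n\ge 1,\ Z_i\in\cQ(R),\ k\subset\prod_{i=1}^n Z_i\bigr\}\subset R.
\]
Note $k\in\mathcal{P}$ (take $Z_1=k$), so $\mathcal{P}$ is nonempty. I would show $\mathcal{P}\subset[0,F(R)]$: if $P=\prod Z_i\in\mathcal{P}$, then $P$ is a sum of its atoms, and any atom $Y\subset P$ satisfies $Y\z k$ (both $Y$ and $k$ lie in $\prod Z_i$), hence $Y\in\varpi_R$, hence $Y\subset F(R)$; summing over the atoms of $P$ yields $P\subset F(R)$.

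Finite length now lets me choose a maximal element $P_0=\prod_{i=1}^n X_i\in\mathcal{P}$ with respect to inclusion. I claim $P_0=F(R)$. If not, there would exist an atom $X\in\varpi_R$ with $X\not\subset P_0$; applying the first step gives atoms $Y_1,\dots,Y_m$ with $X,k\subset Q:=\prod_{j=1}^m Y_j$. The product $P_0\cdot Q$ is again a product of atoms, contains $k=k\cdot k$ (hence lies in $\mathcal{P}$), and strictly contains $P_0$ because $P_0\cdot Q\supset k\cdot Q=Q\ni X$ while $X\not\subset P_0$. This contradicts maximality, so $P_0=F(R)$ as desired.

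The main obstacle I anticipate is purely conceptual rather than technical: one must realize that $\z$-equivalence (as opposed to $\z^*$-equivalence) gives a uniform witness product, and that the operation $P\mapsto P\cdot Q$ of concatenating products is precisely the move that enlarges $P_0$ while keeping it in $\mathcal{P}$. Once these two observations are in place, the finite length hypothesis makes the argument terminate and no further subtleties arise.
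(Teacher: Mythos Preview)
Your proof is correct and follows essentially the same route as the paper: both show that any product of atoms containing $k$ lies in $F(R)$, select such a product that is maximal (the paper phrases this as minimizing the length of the interval $[E,F(R)]$, which is equivalent), and then derive a contradiction by concatenating with a product witnessing $X\z k$ supplied by Theorem~\ref{T:trans}(2). The only cosmetic difference is that you invoke the ascending chain condition implied by finite length to get a maximal $P_0$, whereas the paper minimizes an integer-valued length; the enlargement step $P_0\mapsto P_0 Q$ is identical in both arguments.
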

\begin{proof} First, note that if $k\subset\prod_{i=1}^n X_i$, then
  $\prod_{i=1}^n X_i\subset F(R)$.  Indeed, if $Z\subset\prod_{i=1}^n
  X_i$ is an atom, then $Z\z k$, so $Z\subset F(R)$.  The claim
  follows because $\prod_{i=1}^n X_i$ is the sum of the atoms it
  contains.

  Choose $E=\prod_{i=1}^n X_i$ containing $k$ such that $[E,F(R)]$ has
  minimal length.  If this length is $0$, then $E=F(R)$, so suppose it
  is positive, i.e., $E\subsetneq F(R)$.  Take $Y\in\varpi_R$ such
  that $Y\subsetneq E$.  By Theorem~\ref{T:trans}, $Y\z 1$, so there
  exist $Y_1,\dots, Y_m\in\cQ(R)$ such that $k,Y\subset
  E'=\prod_{j=1}^m Y_j$.  This implies that $E=E k\subset EE'$ and
  $Y=kY\subset EE'$, and the previous paragraph shows that
  $EE'\subset F(R)$. We obtain $E\subsetneq EE'\subset F(R)$,
  contradicting the minimality of the length of $[E,F(R)]$.
\end{proof}

We apply the proposition to prove part~\eqref{repro} of the theorem.
Indeed, if $Z\in\cQ(R)$ and $Z\subset F(R)=\prod_{i=1}^n X_i$, then
$Z\z 1$ by definition.  This means that $Z\in\varpi_R$ as desired.

Finally, we prove part~\eqref{part4}.  Suppose that $R$ is
reproducible of finite length, and $X,Y\subset F(R)$ are atoms.  By
part~\eqref{repro}, $X,Y\in\varpi_R$.  By hypothesis, there exists
$Z\in \cQ(R)$ such that $Y\in X\circ Z$.  Since
$1=\psi(Y)=\psi(X)\psi(Z)=\psi(Z)$, $Z\subset F(R)$ and so
$\cQ([0,F(R)])=X\circ\cQ[0,F(R)])$.  Similarly, one shows
$\cQ([0,F(R)])=\cQ[0,F(R)])\circ X$, so $\cQ([0,F(R)])$ is
reproducible.  The same argument applies when $R$ is weakly
reproducible; here, one takes $Y=k$.  This completes the proof of
Theorem~\ref{T:focus}.

\section{Applications to $G$-algebras}\label{S:Galgs}

In this section, we apply our results on atomistic semirings to
subrepresentation semirings.  We assume throughout that $A$ is a
$G$-algebra which is completely reducible as a representation.  We
write $Q_G(A)$ (resp. $F_G(A)$) instead of $Q(S_G(A))$ (resp.
$F(S_G(A)))$.  We will now be able to generalize our earlier results
on multiplicity-free $G$-algebras.

\begin{prop}\label{P:trivone} Let $A$ be a $G$-algebra in which the trivial
  representation has multiplicity one.  Then $A$
 has no proper, nontrivial one-sided
  invariant ideals if and only if $S_G(A)$ is weakly reproducible.
\end{prop}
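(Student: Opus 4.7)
The plan is to prove both implications. The reverse direction is straightforward: assume $S_G(A)$ is weakly reproducible and let $I$ be a nonzero invariant one-sided ideal, say a left ideal. Complete reducibility of $A$ produces an irreducible subrepresentation $X \in \cQ_G(A)$ with $X \subset I$. Weak reproducibility yields an atom $Z$ with $k \subset ZX$, and since $I$ is a left ideal, $k \subset ZX \subset AX \subset AI \subset I$; hence $1_A \in I$ and $I = A$. The right-ideal case is symmetric, using an atom $Y$ with $k \subset XY$. Note that this direction does not require the multiplicity-one hypothesis.

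For the forward direction, assume that $A$ has no proper nontrivial invariant one-sided ideals, and fix $X \in \cQ_G(A)$. The subspace $AX$ is a nonzero invariant left ideal, so $AX = A$. Writing $A = \bigoplus_i U_i$ as an internal direct sum of irreducible subrepresentations, we obtain $A = \sum_i U_i X$ and in particular $1_A \in \sum_i U_i X$. The decisive step is to extract a single index $j$ with $k \subset U_j X$; the required atom is then $Z = U_j$. This is where multiplicity one is used. Since $k = k 1_A$ is the entire trivial isotypic component of $A$, there is a $G$-equivariant projection $\pi \colon A \to k$ along the complementary isotypic components, with $\pi(1_A) = 1_A$. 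Writing $1_A = \sum_i v_i$ with $v_i \in U_i X$ (almost all zero) and applying $\pi$ yields $1_A = \sum_i \pi(v_i)$, so some $\pi(v_j) \neq 0$. Then $\pi|_{U_j X}$ is a nonzero, hence surjective, $G$-map to $k$, and complete reducibility of $U_j X$ splits off a trivial summand, giving $k \subset U_j X$. The right-side statement is proved by the same argument applied to the nonzero invariant right ideal $XA = A$, producing an atom $Y$ with $k \subset XY$.

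The main obstacle is precisely this extraction step, and it is where the multiplicity-one hypothesis is indispensable. Without it, the trivial isotypic of $A$ could have dimension greater than one, and the sum $\sum_i U_i X$ could contain $k1_A$ even though every individual $U_i X$ misses the specific line $k1_A$: the trivial representation inside $\sum_i U_i X$ could be genuinely distributed across multiple summands. Under multiplicity one the trivial isotypic is one-dimensional, so for any subrepresentation meeting the trivial isotypic is the same as containing $k$, and the projection argument goes through cleanly.
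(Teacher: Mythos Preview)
Your proof is correct and follows essentially the same approach as the paper's: both directions match the paper's argument, with the forward direction using $AX=A$ together with the multiplicity-one hypothesis to locate a single summand $U_jX$ containing $k$, and the converse showing that a proper invariant ideal obstructs weak reproducibility. Your treatment is slightly more explicit---you spell out the projection onto the trivial isotypic component where the paper simply asserts that the trivial representation must occur in some $U_jX$---and you correctly observe that the converse does not use the multiplicity-one hypothesis.
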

\begin{rmk} Note that both conditions imply that $S_G(A)$ is entire.
  Indeed, if the condition on invariant ideals holds, then the
  argument given in the proof of Proposition~\ref{Prop:mf} shows that
  $S_G(A)$ is entire.  The analogous statement for weak reproducibility was
  shown in Remark~\ref{R:entire}.
\end{rmk}

\begin{proof}
  Assume that $A$ has no proper, nontrivial invariant ideals.  Fix
  $X\in\cQ_G(A)$, and express $A$ as a direct sum of irreducible
  subrepresentations $A=\bigoplus_{i\in I} Y_i$.  The subspace $AX$ is
  a nonzero invariant left ideal, so we obtain $A=AX=\sum_{i\in I}
  XY_i$.  The trivial representation must accordingly be an
  irreducible component of some $XY_j$.  The fact that the trivial
  representation has multiplicity one in $A$ implies that $k\subset
  XY_j$ as desired.  Similarly, since $A=XA$, there exists $Y_l$ such
  that $k\subset Y_lX$.

Conversely, suppose that $0\ne L\ne A$ is an invariant left ideal.
Let $X\subset L$ be an irreducible submodule.  For any $Y\in\cQ_G(A)$,
we have $YX\subset L$; since $k\cap L=0$, $S_G(A)$ is not weakly
reproducible.  A similar argument works for right ideals.
\end{proof}

\begin{cor} The atomistic semiring $S_G(A)$ is weakly reproducible of
  finite length if
\begin{enumerate}\item $A$ is a finite Galois extension of $k$ and $G$ is the
  Galois group; or
\item $A=\End(V)$ is a finite-dimensional $G$-algebra whose underlying
  projective representation $V$ is irreducible.
\end{enumerate}
\end{cor}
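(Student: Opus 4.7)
The plan is to reduce both parts to Proposition~\ref{P:trivone}, after first noting that $S_G(A)$ has finite length in each case because $A$ is finite-dimensional over $k$ (so $S_G(A) \subset S(A)$, and the ambient subspace lattice has finite length). What remains in each case is to verify the two hypotheses of that Proposition: (a) the trivial representation occurs in $A$ with multiplicity one, and (b) $A$ has no proper, nontrivial one-sided invariant ideals.

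For case (1), the fundamental theorem of Galois theory gives $A^G = k$, which is exactly the statement that the trivial-isotypic component of $A$ is one-dimensional, so (a) holds. Condition (b) is immediate since $A$ is a field, hence has no proper nonzero ideals of any kind. Complete reducibility of $A$ as a $G$-module (needed to land in the atomistic setting the section assumes) is provided by the normal basis theorem, which realizes $A$ as the regular representation $k[G]$.

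For case (2), condition (b) is precisely the characterization of projective irreducibility of $V$ recalled from~\cite{S1} just before the multiplicity-free Corollary in Section~\ref{S:atom}, and can be invoked verbatim. For (a), the trivial-isotypic component of the $G$-module $\End(V)$ is the space $\End_G(V)$ of endomorphisms commuting with the projective $G$-action; the Schur-type statement for irreducible projective representations then gives $\End_G(V) = k \cdot \id_V$, so the trivial representation occurs exactly once.

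The main potential obstacle is the Schur step in case (2): in general, $\End_G(V)$ is only guaranteed to be a division algebra over $k$, so the equality $\End_G(V) = k$ requires an appropriate hypothesis on $k$ (for example, algebraic closure, as is implicit in the running examples over $\C$ such as the irreducible representations of $\SU(2)$ highlighted earlier). Granted this, the corollary is then just an assembly of Proposition~\ref{P:trivone} with the finite-length observation and the cited result from~\cite{S1}, and no further computation is needed.
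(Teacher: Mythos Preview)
Your approach is the same as the paper's: reduce to Proposition~\ref{P:trivone}, use Schur's lemma for the multiplicity-one condition in case~(2), cite~\cite{S1} (the paper specifies Theorem~5.2 there) for the invariant-ideal condition, and treat case~(1) as straightforward. One minor slip: the normal basis theorem identifies $A$ with $k[G]$ as $G$-modules but does not by itself give complete reducibility (that needs $\tchar k\nmid|G|$ via Maschke); this is harmless here since complete reducibility is a standing hypothesis of the section rather than something you need to derive. Your caveat about Schur requiring $\End_G(V)=k$ is apt---the paper likewise invokes ``Schur's lemma'' without further comment.
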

\begin{proof}  Schur's lemma shows that $\End(V)$ contains the trivial
  representation with multiplicity one, and the statement about
  invariant ideals was proved in~\cite[Theorem 5.2]{S1}.  The analogous
  verifications for the other case are obvious.
\end{proof}

We are thus able to define invariants for any $G$-algebra satisfying
the conditions of Proposition~\ref{P:trivone}, without our earlier
assumption that the $G$-algebra is multiplicity-free.  In particular,
our results determine two new sequences of invariants associated to
any irreducible projective representation, namely, the condensation
groups $Q^n_G(\End(V))$ and the focal subalgebras $F^n_G(\End(V))$.

The focal subalgebras $F_G^n(A)$ are a decreasing sequence of
invariant subalgebras (i.e., subalgebras which are also
subrepresentations) of $A$.  This is particularly interesting for
$A=\End(V)$ with $V$ irreducible and $k$ algebraically closed because
in this case, there is a complete classification of such invariant
subalgebras in terms of representation-theoretic data~\cite[Theorem
3.23]{S1}.

For the rest of the paper, we assume that either $G$ is finite and $k$
is algebraically closed of characteristic zero or $G$ is a compact
group and $k=\C$.  We let $V$ be an irreducible (linear)
representation of $G$, and set $A=\End(V)$.  (We make these assumptions
on $G$ and $k$ to guarantee complete reducibility of $\End(V)$; the
classification of invariant subalgebras described below holds in
general.)

An invariant subalgebra of $A$ is determined by data consisting of a
quadruple $(H,W,U,U')$; here, $H$ is a finite index subgroup of $G$,
$W$ is a linear representation of $H$ such that $V=\Ind_H^G(W)$, and
$U,U'$ are a pair of projective representations of $H$ such that
$W\cong U\otimes U'$.  More precisely, there is a bijection between
invariant algebras and equivalence classes of such quadruples under
conjugation by $G$.  In particular, there are a finite number of
invariant subalgebras.

Given such a quadruple $(H,W,U,U')$, we construct the corresponding
invariant subalgebra as follows: Let $g_1\mathbin{=}e,g_2,\dots,g_n$ be a left
transversal for $H$ in $G$.  This gives a direct sum decomposition
$V=\bigoplus_{i=1}^n g_iW$ and an associated block diagonal invariant
subalgebra $\Ind_H^G(\End(W))\overset{\text{def}}{=}\bigoplus_{i=1}^n
\End(g_iW)$.  As an algebra, this is just the direct product of $n$
copies of $\End(W)$.  Next, the isomorphism $W\cong U\otimes U'$ shows
that the endomorphism algebra factors (as $H$-algebras) into the
tensor product $\End(W)\cong
\End(U)\otimes\End(U')$.  It is now immediate that $\End(U)\otimes k$
is an $H$-invariant subalgebra of $\End(W)$.  Finally, we obtain the
invariant algebra for the quadruple: $\Ind_H^G( \End(U)\otimes k)$.
We remark that the two obvious invariant subalgebras $k$ and
$\End(V)$ correspond to $(G,V,k,V)$ and $(G,V,V,k)$ respectively.

It now follows that the sequence of focal subalgebras associated to
the irreducible representation $V$ gives rise to a sequence of such
quadruples.

The classification of invariant subalgebras can be very helpful for
computing the $F^n_G(\End(V))$.  For example, suppose that $\End(V)$
has no nontrivial invariant subalgebras, so that any irreducible
representation generates $\End(V)$.  In order to show that
$F_G(\End(V))=\End(V)$, it is only necessary to check that
$F_G(\End(V))$ contains a nonscalar matrix.  However, it should be
noted that computing the invariant subalgebras is not necessarily
straightforward.  Even when $G$ is finite, it is not determined by the
character table of $G$.  In general, one needs to know the character
tables of a covering group for every subgroup of $G$ whose index
divides $\dim(V)$.
 
\begin{exam} Let $V$ be the standard representation of $S_3$.  We have
  already seen that $Q^1_G(\End(V))=\Z_2$.  The focal subalgebra
  $F^1_G(\End(V))=\C\oplus \s$ is isomorphic to $\C\oplus\C$ as an
  algebra; it comes from the quadruple $(A_3,\chi,\chi,\C)$, where
  $\chi$ is either nontrivial character of $A_3$.  Since $\C$ and $\s$
  are not $\z^*$-equivalent in $\cQ_G(F^1_G(\End(V)))$, we have
  $Q^2_G(\End(V))=\Z_2$ and for $m\ge 2$, $F^m_G(\End(V))=\C$
  (corresponding to $(S_3,V,\C,V)$).  Finally, $Q^n_G(\End(V))=1$ for
  $n\ge 3$,
\end{exam}

\begin{exam}\label{E:A4} Let $W$ be the three-dimensional irreducible
  representation of $A_4$.  We will show that $\cQ_{A_4}(\End(W))$ is
  not associative and does not satisfy the reproductive law.

  We have the direct sum decomposition $\End(W)=\C\oplus Z\oplus
  Z'\oplus X\oplus Y$, where $Z$ and $Z'$ correspond to the two
  nontrivial characters of $A_4$ and $X$ and $Y$ are isomorphic to
  $W$.  We can choose a basis for $W$ with respect to which $Y$ (resp.
  $X$) consists of the skew-symmetric (resp.  off-diagonal symmetric)
  matrices and the diagonal $T$ is the direct sum of $\C$, $Z$, and
  $Z'$.  There are an infinite number of atoms isomorphic to $W$,
  parameterized by $[a:b]\in\mathbf{P}^1(\C)$; we set
  \begin{equation*} U_{[a:b]}=\spa\{(a+b)E_{23}+(a-b)E_{32},(a-b)E_{13}+(a+b)E_{31},(a+b)E_{12}+(a-b)E_{21}\}.
  \end{equation*}
  In this notation, $X=U_{[1:0]}$ and $Y=U_{[0:1]}$.

  Let $P=U_{[1:1]}$.  It is easily checked that $P^2=U_{[1:-1]}$.  We
  now calculate that $P\C=PZ=PZ'=P$, $P(P^2)=T$, and
  $PU_{[a:b]}=T\oplus P^2$ for $[a:b]\ne [1:\pm 1]$.  We thus see that
  $\cQ_{A_4}(\End(W))$ does not satisfy the reproductive law; if
  $[a:b]\ne [1:\pm 1]$, there is no $V$ for which $U_{[a:b]}\in P\circ
  V$.  To verify that the associative law does not hold, note that
  $X\in(P\circ P^2)\circ X=\{\C,Z,Z'\}\circ X$.  However,
  $P\circ(P^2\circ X)=P\circ\{\C,Z,Z',P\}=\{P,P^2\}$ does not contain
  $X$.

Since $PX=T\oplus P^2$, we have $\C,Z,Z',P^2\in\varpi$.  Also, $P^2
X=T\oplus P$, so $Q\in\varpi$.  This implies that
$F^n_{A_4}(\End(W))=\End(W)$ for all $n$, and by
Corollary~\ref{C:max}, $Q^n_{A_4}(\End(W))=1$ for all $n$.

The only nontrivial invariant subalgebra of $\End(W)$ is $T$.  (It
corresponds to $(H,\chi,\chi,\C)$, where $H\cong\Z_2\times\Z_2$ is the
subgroup of order $4$ and $\chi$ is any nontrivial character of $H$.)
Thus, one knows that $F_{A_4}(\End(W))=\End(W)$ as soon as one know
that $P^2\in\varpi$.

\end{exam}

We conclude by computing the condensation groups and focal subalgebras
of endomorphism algebras for simple compact Lie groups.
\begin{thm}\label{T:Lie}  Let $V$ be an irreducible representation of the 
  simple compact Lie group $G$.  Then $Q^n_G(\End(V))=1$ and
  $F^n_G(\End(V))=\End(V)$ for all $n$.
\end{thm}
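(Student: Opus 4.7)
By Proposition~\ref{P:trivone} and the corollary following it, $S_G(\End(V))$ is weakly reproducible and of finite length, so Corollary~\ref{C:max} reduces the first equality to showing $F_G(\End(V))=\End(V)$. Once this is known, the higher iterates are immediate: $F^1_G(\End(V))=\End(V)$ forces $[0,F^1(S_G(\End(V)))]=S_G(\End(V))$, and so by recursion $F^{n+1}_G(\End(V))=F^n_G(\End(V))$ and $Q^{n+1}_G(\End(V))=Q^n_G(\End(V))$ for every $n$. The plan thus reduces to showing that every atom $Y\in\cQ_G(\End(V))$ satisfies $Y\z k$.

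The strategy is to exhibit one atom $X$ and an integer $N\ge 2$ with $k\subset X^N=\End(V)$; any atom $Y$ will then lie in the product $X^N=X\cdots X$ alongside $k$, giving $Y\z k$ directly. Setting aside the trivial case $V=k$, the natural candidate is $X=d\rho(\mathfrak{g}_\C)\subset\End(V)$, the image of the complexified Lie algebra under the differential of $\rho\colon G\to GL(V)$. Since $\ker d\rho$ is an ideal of the simple Lie algebra $\mathfrak{g}_\C$ and is not all of $\mathfrak{g}_\C$ (as $V$ is nontrivial), it vanishes, so under the conjugation action of $G$ on $\End(V)$ the subspace $X$ is $G$-isomorphic to the adjoint representation. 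Simplicity of $G$ makes this representation irreducible, so $X$ is an atom of $\cQ_G(\End(V))$.

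Three standard facts about this $X$ combine to yield the conclusion. First, semisimplicity $\mathfrak{g}_\C=[\mathfrak{g}_\C,\mathfrak{g}_\C]$ together with $[a,b]=ab-ba$ gives $X\subset X^2$, and hence inductively $X^n\subset X^{n+1}$ for all $n\ge 1$. Second, the Casimir element $\sum_i X_iX^i$, built from dual bases with respect to the Killing form, lies in $X\cdot X=X^2$ and acts on the nontrivial irreducible $V$ as the nonzero scalar $\langle\lambda,\lambda+2\rho\rangle$; hence $k\subset X^2$. Third, the Jacobson density (equivalently, Burnside) theorem, applied to the irreducible $U(\mathfrak{g}_\C)$-module $V$ over the algebraically closed field $\C$, shows that the associative subalgebra of $\End(V)$ generated by $X$ is all of $\End(V)$. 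In finite dimensions the first fact makes the chain $X\subset X^2\subset X^3\subset\cdots$ ascending, and the third forces it to stabilize at $\End(V)$. The second then ensures $k\subset X^N$ for every $N\ge 2$.

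The hard part here is conceptual rather than technical: one has to recognize $d\rho(\mathfrak{g}_\C)$ as the right atom to single out, and to see that semisimplicity (yielding $X\subset X^2$), the nonvanishing of the Casimir (yielding $k\subset X^2$), and the density theorem (yielding $\End(V)$ as the algebra generated by $X$) are perfectly matched to the definition of the relation $\z$. Once these are in place, choosing $N$ large enough that $X^N=\End(V)$ gives $k,Y\subset X^N$ for every atom $Y$, so $Y\z k$ and the proof is complete.
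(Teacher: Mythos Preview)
Your proof is correct and takes a genuinely different route from the paper's. Both arguments reduce to showing $F_G(\End(V))=\End(V)$ via Corollary~\ref{C:max}, but they diverge from there.

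The paper chooses as its distinguished atom the Cartan component $X\subset V\otimes V^*$ of highest weight $\lambda-w_0\lambda$, and by an explicit computation with highest and lowest weight vectors shows that $X^2$ contains $k$ and a rank-one (hence nonscalar) matrix. It then invokes an external classification result \cite[Theorem~4.3]{S1}, which says that $\C$ and $\End(V)$ are the only invariant subalgebras; since $F_G(\End(V))$ is an invariant subalgebra strictly larger than $\C$, it must be everything. Your approach instead takes $X=d\rho(\mathfrak{g}_\C)$, the adjoint component, and proves directly that some power $X^N$ equals $\End(V)$ with $k\subset X^N$: semisimplicity gives the nesting $X\subset X^2\subset\cdots$, the Casimir gives $k\subset X^2$, and Burnside's theorem forces the chain to exhaust $\End(V)$. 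This is more self-contained, avoiding any appeal to the classification of invariant subalgebras; the paper's argument is shorter once that classification is in hand, but yours stands on its own and makes the structure of $\varpi$ completely explicit.
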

\begin{proof}  If $V=\C$, the statement is trivial.  Any other $V$ has
  dimension at least $2$.  By Corollary~\ref{C:max}, it suffices to show that
  $F_G(\End(V))=\End(V)$.  Moreover, by \cite[Theorem 4.3]{S1}, the
  only proper invariant subalgebra of $\End(V)$ is $\C$.  Hence, we
  need only show that $F_G(\End(V))$ contains a nonscalar matrix.

  Let $\l$ be the highest weight of $V$.  The highest weight of the
  dual representation $V^*$ is $-w_0\l$, where $w_0$ is the longest
  element in the Weyl group.  The representation $\End(V)\cong
  V\otimes V^*$ has a unique irreducible submodule $X$ with highest
  weight $\l-w_0\l$.  We can write down a highest and lowest weight
  vector in $X$ explicitly.  Let $v_\l$ (resp. $w_\l$) be a highest
  (resp. lowest) weight vector in $V$.  (The highest and lowest
  weights are different since $\dim V\ge 2$.)  Extend the set
  $\{v_\l,w_\l\}$ to a basis of weight vectors for $V$, and let
  $v^*_\l, w^*_\l$ be the corresponding dual basis vectors in $V^*$.
  Then $w^*_\l$ (resp. $v^*_\l$) is a highest (resp. lowest) weight
  vector in $V^*$.  It follows that $v_\l\otimes w^*_\l$ (resp.
  $w_\l\otimes v^*_\l$) is a highest (resp. lowest) weight vector in
  $X$.

  Multiplying, we obtain $z=(v_\l\otimes w^*_\l)(w_\l\otimes v^*_\l)=
  v_\l\otimes v^*_\l\in X^2$.  The matrix $z$ has rank one, so is not
  a scalar matrix.  Thus, $X^2\ne \C$.  However, $\tr(z)=1$, so $z$ is
  not orthogonal to $\C$.  This implies that $\C\subset X^2$.  We
  conclude that $\varpi$ contains at least two elements, so
  $F_G(\End(V))\ne \C$.
\end{proof}



\bibliographystyle{pnaplain}

\end{document}